\title[Basepoint-free theorem of Reid--Fukuda type]
{Basepoint-free theorem of Reid--Fukuda type 
for quasi-log schemes}
\author{Osamu Fujino}
\date{2015/7/20, version 1.05}
\subjclass[2010]{Primary 14E30; Secondary 14C20.}
\keywords{quasi-log schemes, basepoint-free theorem, 
minimal model program}
\address{Department of Mathematics, Graduate School of Science,
Kyoto University, Kyoto 606-8502, Japan}
\email{fujino@math.kyoto-u.ac.jp}
\newcommand{\Bs}[0]{\operatorname{Bs}}
\newcommand{\Nklt}[0]{\operatorname{Nklt}}
\newcommand{\Nqklt}[0]{\operatorname{Nqklt}}
\newcommand{\Nqlc}[0]{\operatorname{Nqlc}}
\newcommand{\Supp}[0]{\operatorname{Supp}}
\newcommand{\Spec}[0]{\operatorname{Spec}}
\newcommand{\Exc}[0]{\operatorname{Exc}}
\newtheorem{thm}{Theorem}[section]
\newtheorem{lem}[thm]{Lemma}
\newtheorem*{claim}{Claim}
\theoremstyle{definition}
\newtheorem{ex}[thm]{Example}
\newtheorem{defn}[thm]{Definition}
\newtheorem{rem}[thm]{Remark}
\newtheorem*{ack}{Acknowledgments}
\newtheorem{notation}[thm]{Notation}
\newtheorem{say}[thm]{}
\newtheorem{step}{Step}
\begin{document}
\bibliographystyle{amsalpha+}

\maketitle

\begin{abstract}
We introduce various new operations for quasi-log structures. 
Then we prove the basepoint-free theorem of Reid--Fukuda type 
for quasi-log schemes as an application. 
\end{abstract}
\tableofcontents

\section{Introduction}\label{f-sec1}

Let $(X, \Delta)$ be a log canonical 
pair and let $f:Y\to X$ be a resolution such that 
$K_Y+\Delta_Y=f^*(K_X+\Delta)$ and $\Supp \Delta_Y$ is a simple normal crossing 
divisor on $Y$. 
We put $S=\Delta_Y^{=1}$ and $\Delta_Y=S+B$. 
We consider the short exact sequence
$$
0\to \mathcal O_Y(-S+\lceil -B\rceil)\to \mathcal O_Y(\lceil -B\rceil)\to 
\mathcal O_S(\lceil -B\rceil)\to 0. 
$$ 
By the Kawamata--Viehweg vanishing theorem, we have 
$R^1f_*\mathcal O_Y(-S+\lceil 
-B\rceil)=0$. 
Therefore, we obtain 
$$
0\to \mathcal J(X, \Delta)\to \mathcal O_X\to f_*\mathcal O_S(\lceil -B\rceil)\to 0
$$ 
where $\mathcal J(X, \Delta)=f_*\mathcal O_Y(-S+\lceil -B\rceil)$ is the multiplier ideal sheaf of $(X, \Delta)$. 
Let $\Nklt (X, \Delta)$ be the non-klt locus of $(X, \Delta)$ with the 
reduced scheme structure. 
Then $f_*\mathcal O_S(\lceil -B\rceil)\simeq \mathcal O_{\Nklt(X, \Delta)}$. 
This data 
$$
f:(S, B|_S)\to \Nklt(X, \Delta)
$$ 
is a typical example of {\em{quasi-log schemes}}. 
In general, $\Nklt(X, \Delta)$ is reducible and is not equidimensional. 
Note that the data 
$$
f:(Y, \Delta_Y)\to X
$$ also defines a natural quasi-log structure on $X$ which is 
compatible with the original log canonical structure of $(X, \Delta)$. 
By the framework of quasi-log schemes, we can treat log canonical 
pairs and their non-klt loci on an equal footing. 

The following theorem is the main theorem of this paper. 
It was stated in \cite{ambro} without 
proof (see \cite[Theorem 7.2]{ambro} and Remark \ref{f-rem1.4}). 
For some related results, 
see \cite[10.4]{shokurov}, \cite{fukuda1}, \cite{fukuda2}, \cite{fukuda-lc}, 
\cite{fujino-reid-fukuda}, \cite[5.~Basepoint-free theorem of 
Reid--Fukuda type]{fujino-base}, and \cite[Theorem 
1.16]{fujino-slc}. 
Note that the comment by Professor Miles Reid in \cite[\S 10]{shokurov} 
is the origin of this type of basepoint-free theorems. 

\begin{thm}[Basepoint-free theorem of 
Reid--Fukuda type for quasi-log schemes]\label{f-thm1.1}
Let $[X, \omega]$ be a quasi-log scheme, 
let $\pi:X\to S$ be a projective morphism between schemes, 
and let $L$ be a $\pi$-nef Cartier divisor on $X$ such that 
$qL-\omega$ is nef and log big over $S$ with respect to $[X, \omega]$ for some positive 
real number $q$. Assume that 
$\mathcal O_{X_{-\infty}}(mL)$ is $\pi$-generated for every $m\gg 0$. 
Then $\mathcal O_X(mL)$ is $\pi$-generated 
for every $m\gg 0$. 
\end{thm}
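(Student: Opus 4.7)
The plan is to run an induction on the dimension of qlc strata of $[X,\omega]$ using the adjunction operation for quasi-log schemes (among the new operations introduced earlier in the paper) together with the vanishing theorem. The strategy follows the Reid--Fukuda philosophy familiar from the log canonical setting: once $\pi$-generation has been established on the union of all lower-dimensional qlc centers, the vanishing theorem allows sections to be lifted to $X$, and the residual quasi-klt part is handled by the standard basepoint-free theorem of Kawamata--Shokurov type for quasi-log schemes.

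Concretely, I would set $W=\Nqklt(X,\omega)$, a closed subscheme of $X$ which by construction contains $X_{-\infty}$. By the adjunction operation for quasi-log schemes, $[W,\omega|_W]$ carries a natural quasi-log structure whose non-qlc locus is contained in $X_{-\infty}$. The restriction $L|_W$ is $\pi$-nef, and $qL|_W-\omega|_W$ remains nef and log big over $S$ with respect to $[W,\omega|_W]$, while the hypothesis $\mathcal{O}_{X_{-\infty}}(mL)$ being $\pi$-generated descends to the non-qlc locus of $W$. Since every qlc stratum of $[W,\omega|_W]$ has dimension strictly less than the maximum dimension of qlc strata of $[X,\omega]$, the induction hypothesis yields that $\mathcal{O}_W(mL|_W)$ is $\pi$-generated for every $m\gg 0$. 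Feeding the short exact sequence
$$0\to \mathcal{I}_W\otimes \mathcal{O}_X(mL)\to \mathcal{O}_X(mL)\to \mathcal{O}_W(mL|_W)\to 0$$
into the vanishing theorem for quasi-log schemes (applicable since $mL-\omega$ is nef and log big over $S$ for $m\geq q$) gives $R^1\pi_*(\mathcal{I}_W\otimes \mathcal{O}_X(mL))=0$, hence a surjection on $\pi_{*}$. Combined with the $\pi$-generation on $W$ this shows that $\Bs_\pi |mL|$ is disjoint from $W$ for every $m\gg 0$.

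It remains to clear the base locus at points $x\in X\setminus W$, where $[X,\omega]$ is locally quasi-klt. Here I would run an X-method style argument using the log-bigness of $qL-\omega$ restricted to the qlc stratum through $x$: for $N$ sufficiently divisible one perturbs a member of $|NL|$ to create a new qlc center passing through $x$, and then uses the cone/pull-back/restriction operations of the paper to build an auxiliary quasi-log structure $[X,\omega']$ whose $\Nqklt$ contains this new center, to which the lifting argument of the previous paragraph can be reapplied. The main obstacle I expect is controlling this auxiliary structure: one has to verify that $qL-\omega'$ remains nef and log big over $S$ with respect to $[X,\omega']$, and that the new non-qlc locus is either covered by the original hypothesis on $X_{-\infty}$ or accessible to the induction. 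Guaranteeing this compatibility is exactly what the new operations for quasi-log structures developed earlier in the paper are designed to achieve, and verifying that they interact correctly with the log-bigness condition will be the delicate technical core of the argument.
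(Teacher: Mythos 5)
The first half of your outline --- adjunction to $W=\Nqklt(X,\omega)$, the dimension induction combined with the hypothesis on $X_{-\infty}$, and the lifting of sections via the vanishing theorem --- is exactly Steps 2 and 5 of the paper's proof and is sound. The problem is the second half. You correctly identify the obstacle (that after adding divisors to create a new qlc center the auxiliary pair $[X,\omega']$ need not satisfy the nef-and-log-big hypothesis), but you leave it unresolved, and this is not a routine compatibility check that the operations of Section \ref{f-sec3} settle for you: it is precisely the point where Ambro's original argument breaks down (see Remark \ref{f-rem1.4}), because $qL-\omega'=q'L-\omega'-c\sum D_i$ restricted to a new qlc center is in general \emph{not} big, so no choice of auxiliary structure makes the naive induction on $[X,\omega']$ go through. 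As written, your proposal therefore has a genuine gap at its technical core.

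The missing idea is a perturbation via Kodaira's lemma that trades ``nef and log big'' for ``ample'' \emph{before} cutting. One passes to $X^\dag=\overline{X\setminus X_{-\infty}}$ (Lemma \ref{f-lem3.14}), writes $qL|_{X^\dag}-\omega^\dag\sim_{\mathbb R}A+E$ with $A$ $\pi$-ample and $E\geq 0$ (this is where projectivity of $\pi$ enters, and why the theorem is weaker than Ambro's statement), and replaces $\omega^\dag$ by $\widetilde\omega=\omega^\dag+\varepsilon E$ using Lemma \ref{f-lem3.15}. Now $qL-\widetilde\omega$ is $\pi$-ample, and ampleness --- unlike log bigness --- survives both restriction to the new center and the addition of $c\sum_{i=1}^{n+1}D_i$ for general $D_i\in |p^lL|$ (with $0<c<1$ guaranteed by Step 5 and Lemma \ref{f-lem3.16}). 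The price is that the qlc strata contained in $\Supp E$ are absorbed into the non-qlc locus, but this is harmless: the base locus is already disjoint from $\Nqklt(X,\omega)$ by your first half, the new center $C_0$ is forced into $\Bs_\pi|p^lL|$ because the $D_i$ are general members of $|p^lL|$ itself, and the section nonvanishing on $C_0$ is constructed so as to vanish on the enlarged non-qlc locus, hence descends to a section of $\mathcal O_X(p^{al}L)$ by Lemma \ref{f-lem3.14}. Noetherian induction on the base locus, applied to two distinct primes $p$ and $p'$, then yields $\pi$-generation for all $m\gg 0$. Without this ample-perturbation step your X-method paragraph cannot be completed.
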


In \cite[Theorem 4.1]{fujino-book}, the author proved Theorem \ref{f-thm1.1} 
with the extra assumption that $X_{-\infty}=\emptyset$. 
Note that the assumption $X_{-\infty}=\emptyset$ 
is harmless for applications to semi-log canonical pairs in \cite[Theorem 
1.16]{fujino-slc}.  
We also note that Ambro's original statement (see \cite[Theorem 7.2]{ambro}) 
only requires that $\pi$ is {\em{proper}}. 
Unfortunately, our proof needs the assumption that $\pi$ is {\em{projective}} 
because we use Kodaira's lemma for big 
$\mathbb R$-divisors on (not necessarily normal) 
irreducible varieties (cf.~\cite[Lemma A.10]{fujino-slc}). 
Therefore, Theorem \ref{f-thm1.1} is slightly weaker than the original 
statement (see \cite[Theorem 7.2]{ambro}). 

\begin{rem}\label{f-rem1.2}
Precisely speaking, it is sufficient to assume that $\pi$ is proper and that 
every qlc stratum $C$ of $[X, \omega]$ is projective over $S$ in Theorem \ref{f-thm1.1}. 
It is obvious by the proof of Theorem \ref{f-thm1.1}. 
\end{rem}

\begin{rem}\label{f-rem1.3}
In Theorem \ref{f-thm1.1}, 
if $qL-\omega$ is ample, then it is well-known that 
$\mathcal O_X(mL)$ is $\pi$-generated 
for every $m\gg 0$ (see \cite[Theorem 5.1]{ambro}, 
\cite[Theorem 3.66]{fujino-book}, and \cite[Theorem 6.5.1]{fujino-foundation}). 
For the proof, see \cite[Theorem 3.66]{fujino-book} (see 
also \cite[Section 6.5]{fujino-foundation}). 
\end{rem}

We give a remark on \cite[Theorem 7.2]{ambro}. 

\begin{rem}\label{f-rem1.4}
Although Ambro wrote that 
the proof of \cite[Theorem 7.2]{ambro} 
is parallel to 
\cite[Theorem 5.1]{ambro}, 
it does not seem to be true as stated, 
as there are some technical problems in the inductive 
step of the proof. 
Steps 1, 2, and 4 in the proof of 
\cite[Theorem 5.1]{ambro} 
work without any modifications. 
In Step 3, 
$q'L-\omega'$ is $\pi$-nef but 
$q'L-\omega'=qL-\omega$ 
is not always nef and log big over $S$ with 
respect to $[X, \omega']$, 
where $\omega'=\omega+cD$ and $q'=q+cm$. So, 
we can not directly apply the argument in Step 1 in the proof of \cite[Theorem 5.1]{ambro} 
to 
this new quasi-log pair $[X, \omega']$. 
\end{rem}

As a special case of Theorem \ref{f-thm1.1}, we have: 

\begin{thm}[Basepoint-free theorem of Reid--Fukuda type 
for log canonical pairs]\label{f-thm1.5}
Let $(X, B)$ be a log canonical pair. 
Let $L$ be a $\pi$-nef Cartier divisor on $X$ 
where $\pi:X\to S$ is a projective morphism between schemes. 
Assume that $qL-(K_X+B)$ is nef and log big over $S$ 
with respect to $(X, B)$ for some positive real number $q$. 
Then $\mathcal O_X(mL)$ is $\pi$-generated for every $m\gg 0$. 
\end{thm}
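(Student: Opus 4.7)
The plan is to derive Theorem~\ref{f-thm1.5} as a direct specialization of Theorem~\ref{f-thm1.1}, by equipping the log canonical pair $(X, B)$ with its natural quasi-log structure.

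First I would take a log resolution $f:Y \to X$ of $(X, B)$ with $K_Y + \Delta_Y = f^*(K_X + B)$ and $\Supp \Delta_Y$ simple normal crossing. As recalled in the introduction, the data $f:(Y, \Delta_Y) \to X$ defines a quasi-log structure $[X, \omega]$ on $X$ with $\omega = K_X + B$ that is compatible with the original log canonical structure. Because $(X, B)$ is log canonical, no coefficient of $\Delta_Y$ exceeds $1$, so the non-qlc locus $X_{-\infty}$ is empty. Consequently the hypothesis that $\mathcal O_{X_{-\infty}}(mL)$ is $\pi$-generated for every $m \gg 0$ in Theorem~\ref{f-thm1.1} is vacuously satisfied.

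Next I would verify that the nef-and-log-big hypotheses translate correctly: for this particular quasi-log structure the qlc strata are precisely $X$ itself together with the log canonical centers of $(X, B)$, so the assumption that $qL-(K_X+B)$ is nef and log big over $S$ with respect to $(X, B)$ coincides verbatim with the condition that $qL-\omega$ be nef and log big over $S$ with respect to $[X, \omega]$. The projectivity hypothesis of Theorem~\ref{f-thm1.1} is also inherited from the one on $\pi$ in Theorem~\ref{f-thm1.5}. With both hypotheses in place, Theorem~\ref{f-thm1.1} immediately yields that $\mathcal O_X(mL)$ is $\pi$-generated for every $m \gg 0$.

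I do not anticipate any real obstacle here, since the reduction is routine and the quasi-log formalism was designed in part to absorb such applications. The only care needed is to confirm the identification of log canonical centers of $(X, B)$ with qlc strata of $[X, \omega]$ for this chosen quasi-log structure, which is standard. All of the genuine work has been packaged into the proof of Theorem~\ref{f-thm1.1}.
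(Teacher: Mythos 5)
Your proposal is correct and matches the paper's own proof: the paper likewise takes a log resolution $f:(Y,B_Y)\to X$, uses Lemma \ref{f-lem3.2} to view $(Y,B_Y)$ as a globally embedded simple normal crossing pair defining a quasi-log structure on $[X,K_X+B]$ with empty non-qlc locus and with qlc strata equal to the lc strata of $(X,B)$, and then invokes Theorem \ref{f-thm1.1}. The only cosmetic difference is that the paper passes through the more general Theorem \ref{f-thm1.7} (allowing a nonempty non-lc locus) before specializing to the lc case.
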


Theorem \ref{f-thm1.5} is nothing but \cite[Theorem 4.4]{fujino-book} 
(see \cite[Corollary 6.9.4]{fujino-foundation}). 
We believe that Theorem \ref{f-thm1.5} holds 
under the weaker assumption that $\pi$ is only {\em{proper}}. 
Note that we do not know the proof of Theorem \ref{f-thm1.5} 
without using the theory of quasi-log schemes. 
The usual basepoint-free theorem for log canonical 
pairs, that is, Theorem \ref{f-thm1.5} with the extra assumption that 
$qL-(K_X+B)$ is ample over $S$, 
can be proved without 
using quasi-log structures (see \cite[Theorem 13.1]{fujino}). 
The proof in \cite{fujino} is much simpler than the arguments in 
this paper. 

\begin{rem}\label{f-rem1.6} 
In Theorem \ref{f-thm1.5}, 
if every log canonical center $C$ of $(X, B)$ is 
projective over $S$, then we can 
prove Theorem \ref{f-thm1.5} under the weaker assumption 
that $\pi:X\to S$ is only {\em{proper}}. 
It is because we can apply Theorem \ref{f-thm1.5} to 
the non-klt locus $\Nklt(X, B)$ of $(X, B)$. 
So, we may assume that 
$\mathcal O_X(mL)$ is $\pi$-generated 
on a non-empty Zariski open subset containing 
$\Nklt(X, B)$. 
In this case, we can prove Theorem \ref{f-thm1.5} by 
applying the usual X-method to $L$ on $(X, B)$. 
We note that $C$ is projective over 
$S$ when $\dim C\leq 1$.  

The reader can find a different proof of Theorem \ref{f-thm1.5} 
in \cite{fukuda-lc} 
when $(X, B)$ is a log canonical {\em{surface}}, 
where Fukuda used the log minimal model program with scaling for divisorial 
log terminal surfaces. 
\end{rem}

More generally, we have: 

\begin{thm}\label{f-thm1.7} 
Let $X$ be a normal variety, let $B$ be an effective $\mathbb R$-divisor 
on $X$ such that $K_X+B$ is $\mathbb R$-Cartier, and 
let $\pi:X\to S$ be a projective morphism between schemes. 
Let $L$ be a $\pi$-nef Cartier divisor on $X$ such that 
$qL-(K_X+B)$ is nef and log big over $S$ with respect 
to $(X, B)$ for some positive real number $q$. 
Assume that $\mathcal O_{\mathrm{Nlc}(X, B)}(mL)$ is 
$\pi$-generated for every $m\gg 0$. 
Note that $\mathrm{Nlc}(X, B)$ denotes the non-lc locus of $(X, B)$ 
and is defined by the non-lc ideal sheaf $\mathcal J_{\mathrm{NLC}}(X, B)$ of 
$(X, B)$. Then $\mathcal O_X(mL)$ is $\pi$-generated for 
every $m\gg 0$. 
\end{thm}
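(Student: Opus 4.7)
The plan is to deduce Theorem~\ref{f-thm1.7} directly from Theorem~\ref{f-thm1.1} by equipping $X$ with the natural quasi-log structure coming from the pair $(X,B)$. Concretely, I would take a resolution $f:Y\to X$ such that $K_Y+B_Y=f^*(K_X+B)$ and $\Supp B_Y$ is a simple normal crossing divisor, write $B_Y=S+B'$ with $S=B_Y^{=1}$, and set $\omega=K_X+B$. The standard construction (cf. the analogous calculation carried out for log canonical pairs in the introduction, and its extension to arbitrary pairs in Fujino's foundation papers) shows that the data $f:(Y,B_Y)\to X$ defines a quasi-log structure on $[X,\omega]$. The crucial point is that, for this quasi-log structure, the non-qlc ideal sheaf coincides with the non-lc ideal sheaf $\mathcal J_{\mathrm{NLC}}(X,B)$, so scheme-theoretically $X_{-\infty}=\mathrm{Nlc}(X,B)$, and the qlc strata of $[X,\omega]$ are exactly $X$ itself together with the log canonical centers of $(X,B)$.

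Having set up this quasi-log structure, I would then translate the hypotheses of Theorem~\ref{f-thm1.7} into those of Theorem~\ref{f-thm1.1}. The assumption that $qL-(K_X+B)$ is nef and log big over $S$ with respect to $(X,B)$ becomes the assumption that $qL-\omega$ is nef and log big over $S$ with respect to $[X,\omega]$, thanks to the identification of qlc strata with log canonical centers. Likewise, the assumption that $\mathcal O_{\mathrm{Nlc}(X,B)}(mL)$ is $\pi$-generated for every $m\gg 0$ is literally the assumption that $\mathcal O_{X_{-\infty}}(mL)$ is $\pi$-generated for every $m\gg 0$. Projectivity of $\pi$ and the $\pi$-nef Cartier hypothesis on $L$ are inherited without change.

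At this point all hypotheses of Theorem~\ref{f-thm1.1} are in force for the quasi-log scheme $[X,\omega]$, the projective morphism $\pi:X\to S$, and the Cartier divisor $L$, and applying it yields $\mathcal O_X(mL)$ $\pi$-generated for every $m\gg 0$, which is the desired conclusion.

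The main obstacle, and essentially the only nontrivial step of this deduction, is verifying the scheme-theoretic identification $X_{-\infty}=\mathrm{Nlc}(X,B)$ together with the matching of qlc strata of $[X,\omega]$ with the log canonical centers of $(X,B)$. These are by now standard facts in Fujino's theory of quasi-log schemes, but they depend on a careful comparison of the non-qlc ideal sheaf defined via $f_*\mathcal O_Y(-S+\lceil -B'\rceil)$ with the non-lc ideal sheaf $\mathcal J_{\mathrm{NLC}}(X,B)$, and on the fact that the construction is independent of the choice of log resolution. Once these identifications are granted, the rest of the argument is purely formal and the theorem follows at once from Theorem~\ref{f-thm1.1}.
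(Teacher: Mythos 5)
Your proposal is correct and essentially identical to the paper's own proof: both endow $X$ with the quasi-log structure induced by a log resolution $f:(Y,B_Y)\to X$ with $K_Y+B_Y=f^*(K_X+B)$ (using Lemma \ref{f-lem3.2}), identify the non-qlc ideal with $\mathcal J_{\mathrm{NLC}}(X,B)$ and the qlc strata with the lc strata of $(X,B)$, and then invoke Theorem \ref{f-thm1.1}. One small slip in your justification: the non-qlc ideal is $f_*\mathcal O_Y(\lceil -(B_Y^{<1})\rceil-\lfloor B_Y^{>1}\rfloor)=f_*\mathcal O_Y(-\lfloor B_Y\rfloor+B_Y^{=1})$, which is literally the definition of $\mathcal J_{\mathrm{NLC}}(X,B)$, whereas the expression $f_*\mathcal O_Y(-S+\lceil -B'\rceil)$ you quote is the non-klt (multiplier) ideal rather than the non-lc ideal.
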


For the details of $\mathcal J_{\mathrm{NLC}}(X, B)$, 
see \cite{fujino-nonlc} and \cite[\S7.~Non-lc ideal sheaves]{fujino}. 
Theorem \ref{f-thm1.7} is new and is 
a generalization of \cite[Theorem 13.1]{fujino} 
and \cite[Theorem 9.1]{fujino-surface}. 

In this paper, we use the following convention. 

\begin{notation}\label{f-notation1.8} 
The expression \lq $\ldots$ for every $m\gg 0$\rq \ means that 
\lq there exists a positive integer $m_0$ such that $\ldots$ for every $m\geq m_0$.\rq
\end{notation}

We summarize the contents of this paper. 
In Section \ref{f-sec2}, we recall some basic definitions. 
In Section \ref{f-sec3}, we recall the basic definitions and 
properties of quasi-log schemes. Then 
we introduce various new operations for quasi-log structures 
(see Lemmas \ref{f-lem3.12}, \ref{f-lem3.14}, \ref{f-lem3.15}, and so on). 
Section \ref{f-sec4} is devoted to the proof of 
the main theorem:~Theorem \ref{f-thm1.1}. 

\begin{ack}
The author was partially supported by Gran-in-Aid 
for Young Scientists (A) 24684002 from JSPS. 
He thanks Kento Fujita for comments. 
He also thanks the referee for useful comments. 
\end{ack} 

We will work over $\mathbb C$, the complex number field, throughout this 
paper. 
For the standard notation of the log minimal model program, see, 
for example, \cite{fujino} and \cite{fujino-foundation}. 
For the basic definitions and properties 
of the theory of quasi-log schemes, 
see \cite{fujino-pull} (see also \cite{fujino-foundation}). 
Note that \cite{fujino-foundation} 
is a completely revised and expanded version of 
the author's unpublished manuscript \cite{fujino-book}. 
For a gentle introduction to the theory of quasi-log schemes 
(varieties), we recommend the reader to see \cite{fujino-intro}. 
In this paper, a {\em{scheme}} means 
a separated scheme of finite type over $\Spec \mathbb C$.  
A {\em{variety}} means a reduced scheme. 

\section{Preliminaries}\label{f-sec2}

In this section, let us recall some basic definitions. 

\begin{say}[Operations for $\mathbb R$-divisors]\label{f-say2.1}
Let $D$ be an $\mathbb R$-divisor 
on an equidimensional variety $X$, that is, 
$D$ is a finite formal $\mathbb R$-linear combination 
$$D=\sum _i d_i D_i$$ of irreducible 
reduced subschemes $D_i$ of codimension one. 
We define the {\em{round-up}} 
$\lceil D\rceil =\sum _i \lceil d_i \rceil D_i$ (resp.~{\em{round-down}} 
$\lfloor D\rfloor =\sum _i \lfloor d_i \rfloor D_i$), where for 
every real number $x$, $\lceil x\rceil$ (resp.~$\lfloor x\rfloor$) is the integer 
defined by $x\leq \lceil x\rceil <x+1$ 
(resp.~$x-1<\lfloor x\rfloor \leq x$). The 
{\em{fractional part}} $\{D\}$ of $D$ denotes $D-\lfloor D\rfloor$. We put 
$$D^{<1}=\sum _{d_i<1}d_i D_i,  \quad 
D^{>1}=\sum _{d_i>1}d_i D_i, \quad \text{and}\quad D^{=1}=\sum _{d_i=1}D_i.$$ 
We call $D$ a {\em{boundary}} (resp.~{\em{subboundary}}) $\mathbb R$-divisor if 
$0\leq d_i\leq 1$ (resp.~$d_i\leq 1$) for every $i$. 
\end{say}

\begin{say}[Singularities of pairs]\label{f-say2.2}
Let $X$ be a normal variety and let $\Delta$ be an 
$\mathbb R$-divisor on $X$ 
such that $K_X+\Delta$ is $\mathbb R$-Cartier. 
Let $f:Y\to X$ be 
a resolution such that $\Exc(f)\cup f^{-1}_*\Delta$, 
where $\Exc (f)$ is the exceptional locus of $f$ 
and $f^{-1}_*\Delta$ is 
the strict transform of $\Delta$ on $Y$,  
has a simple normal crossing support. We can 
write 
\begin{equation}\label{f-eq2.1}
K_Y=f^*(K_X+\Delta)+\sum _i a_i E_i. 
\end{equation}
We say that $(X, \Delta)$ 
is {\em{sub log canonical}} ({\em{sub lc}}, for short) if $a_i\geq -1$ for every $i$. 
We usually write $a_i= a(E_i, X, \Delta)$
and call it the {\em{discrepancy coefficient}} of 
$E_i$ with respect to $(X, \Delta)$. 
If $(X, \Delta)$ is sub log canonical and $\Delta$ is effective, then 
$(X, \Delta)$ is called {\em{log canonical}} ({\em{lc}}, for short). 

It is well-known that there is the largest Zariski open subset $U$ 
of $X$ such that  
$(U, \Delta|_U)$ is sub log canonical. 
If there exist a resolution $f:Y\to X$ and a divisor $E$ on $Y$ such 
that $a(E, X, \Delta)=-1$ and $f(E)\cap U\ne \emptyset$, then $f(E)$ is called a 
{\em{log canonical center}} (an {\em{lc center}}, for short) with respect to $(X, \Delta)$. 
A closed subset $C$ of $X$ is called a {\em{log canonical stratum}} 
(an {\em{lc stratum}}, for short) of $(X, \Delta)$ if and only if 
$C$ is a log canonical center of $(X, \Delta)$ or $C$ is 
an irreducible component of $X$. 

From now on, we assume that 
$\Delta$ is effective. In the above formula 
\eqref{f-eq2.1}, we put $\Delta_Y=-\sum _i a_i E_i$. 
Then 
$$
\mathcal J(X, \Delta)=f_*\mathcal O_{Y}(-\lfloor \Delta_Y\rfloor)
$$ 
is a well-defined ideal sheaf on $X$ and is known as the {\em{multiplier 
ideal sheaf}} associated to the pair $(X, \Delta)$. 
The closed subscheme $\Nklt(X, \Delta)$ defined by $\mathcal J(X, \Delta)$ 
is called the {\em{non-klt locus}} of $(X, \Delta)$. 
We put 
$$
\mathcal J_{\mathrm{NLC}}(X, \Delta)=f_*\mathcal O_{Y}(-\lfloor \Delta_Y\rfloor +
\Delta_Y^{=1})
$$ 
and call it the {\em{non-lc ideal sheaf}} associated to the pair $(X, \Delta)$. 
The closed subscheme $\mathrm{Nlc}(X, \Delta)$ is defined by 
$\mathcal J_{\mathrm{NLC}}(X, \Delta)$ and 
is called {\em{the non-lc locus}} of $(X, \Delta)$. 
\end{say}

The notion of {\em{nef and log big divisors}} 
was first introduced in \cite[10.4]{shokurov} by 
Miles Reid. 
For the details of big $\mathbb R$-divisors 
on non-normal irreducible varieties, see \cite[Appendix A]{fujino-slc}. 

\begin{say}[Nef and log big divisors]\label{f-say2.3}
Let $X$ be a normal variety, let $\Delta$ be an effective $\mathbb R$-divisor 
on $X$ such that $K_X+\Delta$ is $\mathbb R$-Cartier, 
and let $\pi:X\to S$ be a proper morphism between schemes. 
Let $L$ be a Cartier divisor on $X$. 
We say that $L$ is {\em{nef and log big}} over $S$ with respect to 
$(X, \Delta)$ if $L$ is nef over $S$ and $L|_C$ is big over 
$S$ for every lc stratum $C$ of $(X, \Delta)$. 
\end{say}

We close this section with: 

\begin{notation}\label{f-notation2.4}
A pair $[X, \omega]$ consists of 
a scheme $X$ and an $\mathbb R$-Cartier 
divisor (or $\mathbb R$-line bundle) on $X$. 
\end{notation}

\section{On quasi-log structures}\label{f-sec3}
In this section, we recall some definitions and basic properties of 
quasi-log schemes and prove some useful lemmas. 
We prove various new lemmas to make the theory of quasi-log schemes 
more flexible and more useful. 
For a quick introduction to the theory of quasi-log schemes (varieties), 
we recommend the reader to see \cite{fujino-intro}.  

Let us quickly recall the definitions of {\em{globally embedded simple 
normal crossing pairs}} and {\em{quasi-log schemes}} for 
the reader's convenience. 
For the details, see, for example, \cite[Section 3]{fujino-pull} 
and \cite[Chapter 5 and Chapter 6]{fujino-foundation}. 

\begin{defn}[Globally embedded simple normal crossing 
pairs]\label{f-def3.1} 
Let $Y$ be a simple normal crossing divisor 
on a smooth 
variety $M$ and let $D$ be an $\mathbb R$-divisor 
on $M$ such that 
$\Supp (D+Y)$ is a simple normal crossing divisor on $M$ and that 
$D$ and $Y$ have no common irreducible components. 
We put $B_Y=D|_Y$ and consider the pair $(Y, B_Y)$. 
We call $(Y, B_Y)$ a {\em{globally embedded simple normal 
crossing pair}} and $M$ the {\em{ambient space}} of $(Y, B_Y)$. 
A {\em{stratum}} of $(Y, B_Y)$ is 
the $\nu$-image of a log canonical stratum of $(Y^\nu, \Theta)$ 
where $\nu:Y^\nu\to Y$ is the normalization and $K_{Y^\nu}+\Theta
=\nu^*(K_Y+B_Y)$. 
\end{defn}

The following lemma is obvious but very important. 

\begin{lem}\label{f-lem3.2} 
Let $Y$ be a smooth irreducible variety and let $B_Y$ be 
an $\mathbb R$-divisor on $Y$ such that 
$\Supp B_Y$ is a simple normal crossing divisor. 
Then $(Y, B_Y)$ is a globally embedded simple normal crossing 
pair. 
\end{lem}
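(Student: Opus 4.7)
The plan is an explicit and direct construction. I would set $M := Y \times \mathbb{A}^1$, let $p \colon M \to Y$ denote the first projection, and realize $Y$ as the Cartier divisor $Y_0 := Y \times \{0\}$ on $M$. Then take $D := p^{*}B_Y$. Since $Y$ is smooth and irreducible, so is $M$, and $Y_0$ is a smooth irreducible divisor on $M$, which is automatically a simple normal crossing divisor. The restriction $D|_{Y_0}$ equals $B_Y$ under the canonical identification $Y_0 \cong Y$.

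It then remains to verify that $\Supp(D + Y_0)$ is a simple normal crossing divisor on $M$ and that $D$ and $Y_0$ share no irreducible component. For the first condition, I would work locally: around any point $y \in Y$, choose a regular system of parameters $x_1, \ldots, x_n$ in $\mathcal O_{Y,y}$ such that $\Supp B_Y$ near $y$ is contained in $\{x_1 \cdots x_k = 0\}$. Then at the point $(y,0) \in M$, the elements $x_1, \ldots, x_n, t$ form a regular system of parameters, where $t$ is the coordinate on $\mathbb A^1$, and every component of $\Supp(D + Y_0)$ passing through $(y,0)$ is cut out by one of $x_1, \ldots, x_k, t$; the simple normal crossing condition therefore holds. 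The second condition is immediate, since every irreducible component of $D$ has the form $D_i \times \mathbb A^1$ for an irreducible component $D_i$ of $\Supp B_Y$, and such a divisor is distinct from $Y_0 = Y \times \{0\}$.

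There is no substantial obstacle: the lemma is essentially an unwinding of Definition \ref{f-def3.1}, and the product with $\mathbb A^1$ supplies the ``extra direction'' needed to view $Y$ as a simple normal crossing divisor in an ambient smooth variety while keeping $B_Y$ as the trace of a transverse divisor.
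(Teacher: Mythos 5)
Your construction is exactly the one in the paper: the author also takes $M=Y\times \mathbb C$, $D=B_Y\times \mathbb C$, and embeds $Y$ as $Y\times\{0\}$, so your proposal is correct and follows essentially the same route, merely spelling out the local verification of the simple normal crossing condition that the paper leaves implicit.
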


\begin{proof}
We put $M=Y\times \mathbb C$, $D=B_Y\times \mathbb C$, and 
$Y=Y\times 
\{0\}$. 
Then $D$ and $Y$ are divisors on $M$ such that 
$D|_Y=B_Y$. This means that $(Y, B_Y)$ is a globally 
embedded simple normal crossing pair. 
\end{proof}

In this paper, we adopt the following definition of 
quasi-log schemes. 
Although it looks slightly different from Ambro's 
original definition in \cite{ambro}, it is equivalent to 
\cite[Definition 4.1]{ambro}. 

\begin{defn}[Quasi-log schemes]\label{f-def3.3}
A {\em{quasi-log scheme}} is a scheme $X$ endowed with an 
$\mathbb R$-Cartier divisor 
(or $\mathbb R$-line bundle) 
$\omega$ on $X$, a proper closed subscheme 
$X_{-\infty}\subset X$, and a finite collection $\{C\}$ of reduced 
and irreducible subschemes of $X$ such that there is a 
proper morphism $f:(Y, B_Y)\to X$ from a globally 
embedded simple 
normal crossing pair satisfying the following properties: 
\begin{itemize}
\item[(1)] $f^*\omega\sim_{\mathbb R}K_Y+B_Y$. 
\item[(2)] The natural map 
$\mathcal O_X
\to f_*\mathcal O_Y(\lceil -(B_Y^{<1})\rceil)$ 
induces an isomorphism 
$$
\mathcal I_{X_{-\infty}}\overset{\simeq}{\longrightarrow} f_*\mathcal O_Y(\lceil 
-(B_Y^{<1})\rceil-\lfloor B_Y^{>1}\rfloor),  
$$ 
where $\mathcal I_{X_{-\infty}}$ is the defining ideal sheaf of 
$X_{-\infty}$. 
\item[(3)] The collection of subvarieties $\{C\}$ coincides with the image 
of $(Y, B_Y)$-strata that are not included in $X_{-\infty}$. 
\end{itemize}
We simply write $[X, \omega]$ to denote 
the above data 
$$
\bigl(X, \omega, f:(Y, B_Y)\to X\bigr)
$$ 
if there is no risk of confusion. 
Note that a quasi-log scheme $X$ is the union of $\{C\}$ and $X_{-\infty}$. 
We also note that $\omega$ is called the {\em{quasi-log canonical class}} 
of $[X, \omega]$, which is defined up to $\mathbb R$-linear equivalence.  
We sometimes simply say that 
$[X, \omega]$ is a {\em{quasi-log pair}}. 
The subvarieties $C$ 
are called the {\em{qlc strata}} of $[X, \omega]$, 
$X_{-\infty}$ is called the {\em{non-qlc locus}} 
of $[X, \omega]$, and $f:(Y, B_Y)\to X$ is 
called a {\em{quasi-log resolution}} 
of $[X, \omega]$. 
We sometimes use $\Nqlc(X, \omega)$ to denote 
$X_{-\infty}$. 
\end{defn}

For the details of the various equivalent definitions of quasi-log schemes, 
see \cite[Sections 3, 4, and 8]{fujino-pull}. 

\begin{rem}\label{f-rem3.4}
A {\em{qlc stratum}} of $[X, \omega]$ was originally 
called a {\em{qlc center}} of $[X, \omega]$ in the literature. 
We change the terminology (see Definition \ref{f-def3.5} below). 
\end{rem}

Our definition of {\em{qlc centers}} is different from 
Ambro's original one in \cite{ambro}. 

\begin{defn}[Qlc centers]\label{f-def3.5}
A closed subvariety $C$ of $X$ is called a {\em{qlc center}} 
of $[X, \omega]$ if $C$ is a qlc stratum of $[X, \omega]$ which is not 
an irreducible component of $X$. 
\end{defn}

\begin{defn}[Qlc pairs]\label{f-def3.6}Let $[X, \omega]$ be a quasi-log scheme. 
Assume that $X_{-\infty}=\emptyset$. 
Then we sometimes simply say that $[X, \omega]$ is 
a {\em{qlc pair}} or 
$[X, \omega]$ is a quasi-log scheme with only {\em{quasi-log canonical 
singularities}}. 
\end{defn}

We need the notion of {\em{nef and log big divisors}} 
on quasi-log schemes for Theorem \ref{f-thm1.1}. 

\begin{defn}[Nef and log big divisors 
for quasi-log schemes]\label{f-def3.7} 
Let $L$ be an $\mathbb R$-Cartier divisor (or 
$\mathbb R$-line bundle) on a quasi-log pair $[X, \omega]$ and 
let $\pi:X\to S$ be a proper morphism between schemes. 
Then $L$ is {\em{nef and log big over $S$ with 
respect to $[X, \omega]$}} if $L$ is 
$\pi$-nef and $L|_C$ is $\pi$-big for every 
qlc stratum $C$ of $[X, \omega]$. 
\end{defn}

The following theorem is a key result for the theory of quasi-log schemes. 
It follows from the Koll\'ar-type torsion-free and vanishing theorem 
for simple normal crossing varieties. 
For the details, see \cite[Chapter 2]{fujino-book}, \cite{fujino-vanishing}, 
\cite{fujino-inj}, and \cite{fujino-foundation}.  

\begin{thm}[{see \cite[Theorems 4.4 and 7.3]
{ambro}, \cite[Theorem 3.39]{fujino-book}, and 
\cite[Theorem 6.3.4]{fujino-foundation}}]\label{f-thm3.8} 
Let $[X, \omega]$ be a quasi-log scheme and let $X'$ be the union of 
$X_{-\infty}$ with a {\em{(}}possibly empty{\em{)}} union of some 
qlc strata of $[X, \omega]$. Then 
we have the following properties. 
\begin{itemize}
\item[(i)] Assume that $X'\ne X_{-\infty}$. Then 
$X'$ is a quasi-log scheme with $\omega'=\omega|_{X'}$ and 
$X'_{-\infty}=X_{-\infty}$. Moreover, the qlc strata of $[X', \omega']$ are 
exactly the qlc strata of $[X, \omega]$ that are included in $X'$. 
\item[(ii)] Assume that $\pi:X\to S$ is a proper morphism between schemes. 
Let $L$ be a Cartier divisor on $X$ such that 
$L-\omega$ is nef and log big over $S$ with respect to $[X, \omega]$. 
Then $R^i\pi_*(\mathcal I_{X'}\otimes \mathcal O_X(L))=0$ for every $i>0$, 
where $\mathcal I_{X'}$ is the defining ideal sheaf of $X'$ on $X$. 
\end{itemize}
\end{thm}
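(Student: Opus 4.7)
The plan is to prove (i) and (ii) in tandem using a single quasi-log resolution $f\colon (Y,B_Y)\to X$ of $[X,\omega]$ and a carefully chosen divisor $T$ on $Y$. Decompose $B_Y=B_Y^{<1}+B_Y^{=1}+B_Y^{>1}$ and set $A=\lceil -(B_Y^{<1})\rceil -\lfloor B_Y^{>1}\rfloor$, so that condition (2) of Definition \ref{f-def3.3} reads $\mathcal I_{X_{-\infty}}\simeq f_*\mathcal O_Y(A)$. Write $X'=X_{-\infty}\cup \bigcup_\alpha C_\alpha$ with each $C_\alpha$ a qlc stratum of $[X,\omega]$. The first step is a preparatory modification: by successively blowing up the ambient space $M$ along strata of $(Y,B_Y)$ whose images are qlc strata contained in $X'$, and adjusting $B_Y$ and the ambient divisor correspondingly, one arranges that every such stratum is dominated by a component of $B_Y^{=1}$. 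Let $T$ be the sum of those components of $B_Y^{=1}$ whose $f$-image lies in $X'$, so that $f(T)\cup X_{-\infty}=X'$ set-theoretically.

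For (ii), twist the short exact sequence
$$
0\to \mathcal O_Y(A-T)\to \mathcal O_Y(A)\to \mathcal O_T(A|_T)\to 0
$$
by $f^*\mathcal O_X(L)$ and push forward by $\pi\circ f$. The key point is that $(A-T)+f^*L-(K_Y+B_Y)$ is $\mathbb R$-linearly equivalent to $f^*(L-\omega)$ modulo an effective divisor supported on $\Supp(B_Y^{<1}+B_Y^{>1})$, and since $L-\omega$ is nef and log big over $S$ with respect to $[X,\omega]$, the hypotheses of the Kollár-type torsion-free and vanishing theorem for embedded SNC pairs are satisfied (see \cite[Chapter 2]{fujino-book}, \cite{fujino-vanishing}, and \cite{fujino-inj}). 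Hence $R^i(\pi\circ f)_*\mathcal O_Y(A-T+f^*L)=0$ for $i>0$, and combined with the identification $f_*\mathcal O_Y(A-T+f^*L)\simeq \mathcal I_{X'}\otimes \mathcal O_X(L)$, this yields (ii).

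For (i), set $Y'=T$ and $B_{Y'}=(B_Y-T)|_T$; adjunction gives $K_{Y'}+B_{Y'}=(K_Y+B_Y)|_{Y'}$, so $(Y',B_{Y'})$ is a globally embedded SNC pair with the same ambient space $M$, and $f|_{Y'}\colon (Y',B_{Y'})\to X'$ satisfies condition (1) of Definition \ref{f-def3.3} with $\omega'=\omega|_{X'}$. Condition (3) is immediate from the construction of $T$, since the strata of $(Y',B_{Y'})$ are precisely those strata of $(Y,B_Y)$ contained in $T$, and their images recover the qlc strata of $[X,\omega]$ contained in $X'$. For condition (2), the same short exact sequence (taken with $L=0$) pushed forward by $f$, together with the vanishing obtained above, identifies $\mathcal I_{X'_{-\infty}}$ with $(f|_{Y'})_*\mathcal O_{Y'}\bigl(\lceil -(B_{Y'}^{<1})\rceil -\lfloor B_{Y'}^{>1}\rfloor\bigr)$ and forces $X'_{-\infty}=X_{-\infty}$.

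The main obstacle is the preparatory blow-up step: it must produce a new quasi-log resolution of $[X,\omega]$ in which every qlc stratum contained in $X'$ is the image of a divisorial stratum of $(Y,B_Y)$, all while preserving condition (2) of the original quasi-log structure on $X$. This invariance of the quasi-log structure under blow-ups along strata is itself a non-trivial consequence of the Kollár-type vanishing in its relative form on $Y$ (the content of various lemmas on pull-backs of quasi-log structures in \cite{fujino-pull}); once it is in place, the rest reduces to the single short exact sequence and its associated vanishing computation sketched above.
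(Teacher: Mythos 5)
Your overall template (restrict a twisted structure-sheaf sequence, apply the Koll\'ar-type torsion-free and vanishing theorems, read off $\mathcal I_{X'}$ and the induced quasi-log structure) is the right one, but the decomposition you chose does not exist in general, and this is a genuine gap. You take $T$ to be a sum of components of $B_Y^{=1}$ and claim that, after blow-ups, every stratum of $(Y,B_Y)$ whose image is a qlc stratum contained in $X'$ is dominated by such a component. This is impossible when $X'$ contains a qlc stratum that is the image of an irreducible component of $Y$ (for instance an irreducible component of $X$ not contained in $X_{-\infty}$; the theorem allows $X'$ to contain arbitrary qlc strata, not just qlc centers), and it also fails for strata of the form $Y_1\cap Y_2$: the blow-up procedure of \cite[Proposition 4.1]{fujino-pull} turns such strata into new irreducible components of $Y$, never into components of $B_Y^{=1}$ sitting on some other component. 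The correct normalization is to arrange that the union $Y'$ of \emph{all} strata of $(Y,B_Y)$ mapping into $X'$ is a union of irreducible components of $Y$, to put $Y''=Y-Y'$, and to use the component-decomposition sequence $0\to\mathcal O_{Y''}(-Y')\to\mathcal O_Y\to\mathcal O_{Y'}\to 0$ twisted by $A-N$ (with $A=\lceil -(B_Y^{<1})\rceil$ and $N=\lfloor B_Y^{>1}\rfloor$), not the divisor-restriction sequence for $T\subset B_Y^{=1}$. This choice also matters for (ii): since $X_{-\infty}\subset X'$, every stratum mapping into $X_{-\infty}$ lands in $Y'$, so no stratum of $(Y'',\{B_{Y''}\}+B_{Y''}^{=1}-Y'|_{Y''})$ maps into $X_{-\infty}$ and the hypotheses of the relative vanishing theorem are satisfied on $Y''$; with your $T$, non-divisorial strata lying over $X_{-\infty}$ survive in the complement, and $L-\omega$ is not assumed big on their images, so the vanishing theorem does not apply there.

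A second, smaller gap: the identifications of $f_*$ of the kernel with $\mathcal I_{X'}$ and of $\mathcal I_{X'_{-\infty}}$ with $f_*\mathcal O_{Y'}(\lceil -(B_{Y'}^{<1})\rceil-\lfloor B_{Y'}^{>1}\rfloor)$ require the connecting homomorphism $f_*\mathcal O_{Y'}(A-N)\to R^1f_*\mathcal O_{Y''}(A-N-Y')$ to vanish. This does not follow from the $R^i\pi_*$ vanishing you invoke for (ii); it is proved by the torsion-free theorem, namely that $R^1f_*\mathcal O_{Y''}(A-N-Y')$ has no associated prime contained in $X'=f(Y')$, while the source is supported on $X'$. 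Your write-up conflates these two distinct applications of the Koll\'ar package.
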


We give a proof of Theorem \ref{f-thm3.8} for the reader's convenience 
because the theory of quasi-log schemes is not popular yet. 

\begin{proof}
By taking some blow-ups of the ambient space $M$ of $(Y, B_Y)$, we 
may assume that the union of all strata of $(Y, B_Y)$ mapped 
to $X'$, which 
is denoted by $Y'$, is a union of irreducible components 
of $Y$ (see \cite[Proposition 4.1]{fujino-pull}). 
We put $K_{Y'}+B_{Y'}=(K_Y+B_Y)|_{Y'}$ and 
$Y''=Y-Y'$. We will prove that $f:(Y', B_{Y'})\to X'$ 
gives the desired quasi-log structure on $[X', \omega']$.  
By construction, we have 
$f^*\omega'\sim _{\mathbb R}K_{Y'}+B_{Y'}$ on $Y'$. 
We put $A=\lceil -(B^{<1}_Y)\rceil$ and 
$N=\lfloor B^{>1}_Y\rfloor$. We consider the following 
short exact sequence 
$$
0\to \mathcal O_{Y''}(-Y')\to \mathcal O_Y\to \mathcal O_{Y'}\to 0. 
$$
By applying $\otimes \mathcal O_Y(A-N)$, 
we have 
$$
0\to \mathcal O_{Y''}(A-N-Y')\to \mathcal O_Y(A-N)\to \mathcal O_{Y'}
(A-N)\to 0. 
$$ 
By applying $f_*$, we 
obtain 
\begin{align*}
0&\to f_*\mathcal O_{Y''}(A-N-Y')\to f_*\mathcal O_Y(A-N)\to f_*\mathcal O_{Y'}
(A-N)
\\ 
&\to R^1f_*\mathcal O_{Y''}(A-N-Y')\to \cdots. 
\end{align*}
By \cite[Theorem 1.1]{fujino-vanishing} and \cite[Theorem 2.39]{fujino-book} 
(see also \cite[Theorem 5.6.3]{fujino-foundation}), 
no associated prime of 
$R^1f_*\mathcal O_{Y''}(A-N-Y')$ is contained in $X'=f(Y')$. 
We note 
that 
\begin{align*}
(A-N-Y')|_{Y''}-(K_{Y''}+\{B_{Y''}\}+B^{=1}_{Y''}-Y'|_{Y''})
&=-(K_{Y''}+B_{Y''})\\ 
&\sim _{\mathbb R}-(f^*\omega)|_{Y''}, 
\end{align*}
where $K_{Y''}+B_{Y''}=(K_Y+B_Y)|_{Y''}$. 
Therefore, the connecting homomorphism 
$\delta: f_*\mathcal O_{Y'}(A-N)\to 
R^1f_*\mathcal O_{Y''}(A-N-Y')$ is zero. 
Thus we obtain the following short exact sequence 
$$
0\to f_*\mathcal O_{Y''}(A-N-Y')\to \mathcal I_{X_{-\infty}}\to f_*\mathcal O_{Y'}
(A-N)\to 0.  
$$  
We put $\mathcal I_{X'}=f_*\mathcal O_{Y''}(A-N-Y')$. 
Then $\mathcal I_{X'}$ defines 
a scheme structure on $X'$. 
We put $\mathcal I_{X'_{-\infty}}=\mathcal I_{X_{-\infty}}/\mathcal I_{X'}$. 
Then $\mathcal I_{X'_{-\infty}}\simeq f_*\mathcal O_{Y'}(A-N)$ 
by the above exact sequence. 
By the following big commutative diagram: 
$$
\xymatrix{
0
\ar[r] 
&
f_*\mathcal O_{Y''}(A-N-Y') 
\ar[d]\ar[r]
&
f_*\mathcal O_Y(A-N)
\ar[d]\ar[r] 
&
f_*\mathcal O_{Y'}(A-N) 
\ar[r]\ar[d] 
& 0 \\ 
0
\ar[r] 
&
f_*\mathcal O_{Y''}(A-Y') 
\ar[r]
&
f_*\mathcal O_Y(A)
\ar[r] 
&
f_*\mathcal O_{Y'}(A) 
\\
0
\ar[r] 
&
\mathcal I_{X'}
\ar[u]\ar[r]
&
\mathcal O_{X}
\ar[u]\ar[r] 
&
\mathcal O_{X'}
\ar[r]\ar[u] 
& 0, 
}
$$
we can see that $\mathcal O_{X'}\to f_*\mathcal O_{Y'}(
\lceil -(B^{<1}_{Y'})\rceil)$ induces 
an isomorphism $$\mathcal I_{X'_{-\infty}}\overset{\simeq}{\longrightarrow} 
f_*\mathcal O_{Y'}(\lceil -(B^{<1}_{Y'})\rceil-
\lfloor B^{>1}_{Y'}\rfloor).$$  
Therefore, $[X', \omega']$ is a quasi-log pair 
such that $X'_{-\infty}=X_{-\infty}$. By construction, the property on 
qlc strata is obvious. 
So, we obtain the desired quasi-log structure of $[X', \omega']$ 
in (i). 

Let $f:(Y, B_Y)\to X$ be a quasi-log resolution as in the 
proof of (i). 
If $X'=X_{-\infty}$ in the above proof of (i), then 
we can easily see that 
$$
f_*\mathcal O_{Y''}(A-N-Y')\simeq f_*\mathcal O_{Y''}(A-N)\simeq \mathcal I_{X_{-\infty}}. 
$$
Therefore, we always have that $\mathcal I_{X'}\simeq 
f_*\mathcal O_{Y''}(A-N-Y')$. 
Note that $$f^*(L-\omega)\sim _{\mathbb R}
f^*L-(K_{Y''}+B_{Y''})$$ on $Y''$, 
where $K_{Y''}+B_{Y''}=(K_Y+B_Y)|_{Y''}$. 
We also note that 
\begin{align*}
&f^*L-(K_{Y''}+B_{Y''})
\\&=
(f^*L+A-N-Y')|_{Y''}
-(K_{Y''}+\{B_{Y''}\}+B^{=1}_{Y''}-Y'|_{Y''}) 
\end{align*} 
and that no stratum of $(Y'', \{B_{Y''}\}+B^{=1}_{Y''}-Y'|_{Y''})$ is mapped 
to $X_{-\infty}$. Then, 
by \cite[Theorem 3.38]{fujino-book} (see also \cite[Theorem 5.7.3]{fujino-foundation}), 
we have 
$$
R^i\pi_*(f_*\mathcal O_{Y''}(f^*L+A-N-Y'))
=R^i\pi_*(\mathcal I_{X'}\otimes \mathcal O_X(L))=0$$ for 
every $i>0$. Thus, we obtain the desired vanishing theorem in (ii). 
\end{proof}

We usually call Theorem \ref{f-thm3.8} (i) {\em{adjunction}} 
for quasi-log schemes. 

Let us recall the following well-known lemma for the reader's convenience 
(see \cite[Proposition 4.7]{ambro}, 
\cite[Proposition 3.44]{fujino-book}, and 
\cite[Lemma 6.3.5]{fujino-foundation}). 

\begin{lem}\label{f-lem3.9}
Let $[X, \omega]$ be a quasi-log scheme with $X_{-\infty}=\emptyset$. 
Assume that 
$X$ is the unique qlc stratum of $[X, \omega]$. 
Then $X$ is normal. 
\end{lem}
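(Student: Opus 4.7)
The plan is to show that, for any quasi-log resolution $f\colon (Y,B_Y)\to X$, the natural map $\mathcal{O}_X\to f_*\mathcal{O}_Y$ is an isomorphism; normality of $X$ will then drop out by factoring $f$ through the normalization.

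Since every qlc stratum is by definition reduced and irreducible and $X$ itself is the unique such stratum, $X$ is reduced and irreducible, and its normalization $\nu\colon X^\nu\to X$ is a well-defined finite morphism satisfying $\mathcal{O}_X\subseteq \nu_*\mathcal{O}_{X^\nu}$. First I would pick a quasi-log resolution $f\colon(Y,B_Y)\to X$ and set $A=\lceil-(B_Y^{<1})\rceil\geq 0$ and $N=\lfloor B_Y^{>1}\rfloor\geq 0$; these effective divisors have disjoint supports. Because $X_{-\infty}=\emptyset$, Definition~\ref{f-def3.3} says that the natural map $\mathcal{O}_X\to f_*\mathcal{O}_Y(A)$ induces an isomorphism onto the subsheaf $f_*\mathcal{O}_Y(A-N)\subseteq f_*\mathcal{O}_Y(A)$, and since $A\geq 0$ this map factors through the inclusion $f_*\mathcal{O}_Y\hookrightarrow f_*\mathcal{O}_Y(A)$.

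The key step is to deduce $N=0$. Let $I\subseteq f_*\mathcal{O}_Y$ denote the image of the natural map $\mathcal{O}_X\to f_*\mathcal{O}_Y$; when embedded in $f_*\mathcal{O}_Y(A)$ it equals $f_*\mathcal{O}_Y(A-N)$. Hence $I=f_*\mathcal{O}_Y\cap f_*\mathcal{O}_Y(A-N)$ inside $f_*\mathcal{O}_Y(A)$, and using the disjointness of the supports of $A$ and $N$ this intersection simplifies to $f_*\mathcal{O}_Y(-N)=f_*\mathcal{I}_N$. Since $\mathcal{O}_X\to f_*\mathcal{O}_Y$ is a ring homomorphism sending $1$ to $1$, we have $1\in\Gamma(Y,\mathcal{I}_N)$, forcing $\mathcal{I}_N=\mathcal{O}_Y$ and therefore $N=0$. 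I expect this bookkeeping of subsheaves inside the common ambient sheaf $f_*\mathcal{O}_Y(A)$ to be the main technical point.

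With $N=0$ the defining isomorphism becomes $\mathcal{O}_X\simeq f_*\mathcal{O}_Y(A)$ via the natural map, which factors through the injection $f_*\mathcal{O}_Y\hookrightarrow f_*\mathcal{O}_Y(A)$; a trivial diagram chase then gives $\mathcal{O}_X\simeq f_*\mathcal{O}_Y$. Factoring $f=\nu\circ g$ through the normalization (possible because each irreducible component of $Y$ is smooth and, being a stratum, dominates the irreducible variety $X$), we obtain $\mathcal{O}_X=f_*\mathcal{O}_Y=\nu_*(g_*\mathcal{O}_Y)\supseteq\nu_*\mathcal{O}_{X^\nu}\supseteq\mathcal{O}_X$, whence $\nu_*\mathcal{O}_{X^\nu}=\mathcal{O}_X$. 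Since $\nu$ is finite, this forces $\nu$ to be an isomorphism, and $X$ is normal.
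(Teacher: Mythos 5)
Your argument follows the paper's proof almost verbatim in outline: both reduce the statement to the isomorphism $\mathcal O_X\simeq f_*\mathcal O_Y$ and then conclude from the chain $\mathcal O_X\subseteq \nu_*\mathcal O_{X^\nu}\subseteq \nu_*g_*\mathcal O_Y=f_*\mathcal O_Y\simeq \mathcal O_X$ that $\nu$ is an isomorphism. Your bookkeeping inside $f_*\mathcal O_Y(A)$ showing $N=\lfloor B_Y^{>1}\rfloor=0$ is correct and is actually more detailed than the paper, which simply asserts $f_*\mathcal O_Y(\lceil -(B_Y^{<1})\rceil)\simeq\mathcal O_X$ and that this implies $f_*\mathcal O_Y\simeq\mathcal O_X$.

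The one step you assert rather than prove is the factorization $f=\nu\circ g$. The universal property of normalization applies to a \emph{normal} source, and $Y$ is only a simple normal crossing variety; your parenthetical justification produces a factorization $g_i\colon Y_i\to X^\nu$ on each irreducible component $Y_i$ separately, but a morphism $g\colon Y\to X^\nu$ further requires these to agree on the intersections $Y_i\cap Y_j$ and to glue. This can be repaired: every irreducible component of $Y_i\cap Y_j$ is a stratum of $(Y,B_Y)$, hence (since $X_{-\infty}=\emptyset$ and $X$ is the unique qlc stratum) dominates $X$, so $g_i$ and $g_j$ coincide with $\nu^{-1}\circ f$ on a dense open subset of the reduced scheme $Y_i\cap Y_j$ and therefore agree everywhere by separatedness; one then glues using the fact that $\mathcal O_Y$ is the equalizer of $\bigoplus_i\mathcal O_{Y_i}\rightrightarrows\bigoplus_{i<j}\mathcal O_{Y_i\cap Y_j}$ for a simple normal crossing variety. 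The paper sidesteps all of this by noting that the indeterminacy locus of $\nu^{-1}\circ f$ contains no stratum of $(Y,B_Y)$ and then replacing $(Y,B_Y)$ by a suitable modification (via \cite[Proposition 4.1]{fujino-pull}) so that $f$ factors through $X^\nu$ on the nose. Either fix is routine, but as written this point is a genuine gap in your proof.
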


The following proof is different from Ambro's 
original one (see \cite[Proposition 4.7]{ambro}). 

\begin{proof}
Let $f:(Y, B_Y)\to X$ be a quasi-log resolution. 
Since $X_{-\infty}=\emptyset$, 
we have $f_*\mathcal O_Y(\lceil -(B_Y^{<1})\rceil)
\simeq \mathcal O_X$. 
This implies that $f_*\mathcal O_Y\simeq \mathcal O_X$. 
Let $\nu:X^{\nu}\to X$ be the normalization. 
By assumption, $X$ is irreducible and every stratum of 
$(Y, B_Y)$ is mapped onto $X$. 
Thus the indeterminacy locus of 
$\nu^{-1}\circ f:Y\dashrightarrow X^\nu$ contains no strata of $(Y, B_Y)$. 
By modifying $(Y, B_Y)$ suitably by \cite[Proposition 4.1]{fujino-pull}, 
we may assume that $f:Y\to X$ factors through 
$X^\nu$. 
$$
\xymatrix{
Y\ar[d]_{\overline f}\ar[dr]^{f}&\\
X^\nu\ar[r]_{\nu}&X
}
$$
Note that the composition 
$$
\mathcal O_X\to \nu_*\mathcal O_{X^\nu}\to \nu_*\overline f_*\mathcal O_Y
=f_*\mathcal O_Y\simeq \mathcal O_X
$$ 
is an isomorphism. 
This implies that $\mathcal O_X\simeq \nu_*\mathcal O_{X^\nu}$. 
Therefore, $X$ is normal. 
\end{proof}

We introduce $\Nqklt(X, \omega)$, which is 
a generalization of the notion of non-klt loci (see \ref{f-say2.2}). 

\begin{notation}\label{f-notation3.10}
Let $[X, \omega]$ be a quasi-log scheme. 
The union of $X_{-\infty}$ with all qlc centers of $[X, \omega]$ is denoted 
by $\Nqklt(X, \omega)$. 
The scheme structure of $\Nqklt(X, \omega)$ is defined in Theorem 
\ref{f-thm3.8}. If $\Nqklt(X, \omega)\ne X_{-\infty}$, then 
$$[\Nqklt(X, \omega), \omega|_{\Nqklt(X, \omega)}]$$ is a quasi-log scheme 
by Theorem \ref{f-thm3.8}. 
Note that $\Nqklt(X, \omega)$ is denoted by ${\mathrm{LCS}}(X)$ 
and is called the {\em{LCS locus}} 
of a quasi-log scheme $[X, \omega]$ in \cite[Definition 4.6]{ambro}. 
\end{notation}

Theorem \ref{f-thm3.11} is also a key result for the theory of 
quasi-log schemes. 

\begin{thm}[{see \cite[Proposition 4.8]{ambro}, 
\cite[Theorem 3.45]{fujino}, and \cite[Theorem 6.3.7]{fujino-foundation}}]
\label{f-thm3.11} 
Assume that $[X, \omega]$ is a quasi-log scheme with $X_{-\infty}=\emptyset$. 
Then we have the 
following properties. 
\begin{itemize}
\item[(i)] The intersection of two qlc strata 
is a union of qlc strata. 
\item[(ii)] For any closed point $x\in X$, 
the set of all qlc strata passing through 
$x$ has a unique minimal element $C_x$. 
Moreover, $C_x$ is normal at $x$. 
\end{itemize}
\end{thm}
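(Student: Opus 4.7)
The plan is to establish (i) first using a careful choice of quasi-log resolution, and then to deduce (ii) from (i) by combining adjunction (Theorem \ref{f-thm3.8}(i)) with the normality criterion of Lemma \ref{f-lem3.9}.

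For (i), let $f:(Y,B_Y)\to X$ be a quasi-log resolution. By successive blow-ups of the ambient space $M$ of $(Y,B_Y)$ along smooth centers permissible with respect to the stratification (cf.\ \cite[Proposition 4.1]{fujino-pull} and the blow-up step at the start of the proof of Theorem \ref{f-thm3.8}), we may arrange that both $f^{-1}(C_1)$ and $f^{-1}(C_2)$ are unions of strata of $(Y,B_Y)$. Then $f^{-1}(C_1)\cap f^{-1}(C_2)=f^{-1}(C_1\cap C_2)$ is likewise a union of strata of $(Y,B_Y)$. Because $X_{-\infty}=\emptyset$, the image under $f$ of every stratum of $(Y,B_Y)$ is by Definition \ref{f-def3.3} a qlc stratum of $[X,\omega]$. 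Moreover, $f:Y\to X$ is surjective since every point of $X$ lies on some qlc stratum, so $f\bigl(f^{-1}(C_1\cap C_2)\bigr)=C_1\cap C_2$. Writing $C_1\cap C_2$ as the image of a union of strata exhibits it as a union of qlc strata, proving (i).

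For (ii), fix a closed point $x\in X$. The set of qlc strata through $x$ is finite and non-empty, so pick any minimal element $C_x$. To see uniqueness, suppose $C_x'$ were another minimal qlc stratum through $x$. By (i), $C_x\cap C_x'$ is a union of qlc strata; since $x\in C_x\cap C_x'$, some qlc stratum through $x$ is contained in $C_x\cap C_x'\subseteq C_x$. Minimality of $C_x$ forces this stratum to equal $C_x$, so $C_x\subseteq C_x'$; by symmetry $C_x=C_x'$. For normality at $x$, apply adjunction (Theorem \ref{f-thm3.8}(i)) to produce the quasi-log structure $[C_x,\omega|_{C_x}]$, whose qlc strata are exactly those qlc strata of $[X,\omega]$ contained in $C_x$. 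Let $U\subseteq C_x$ be the open complement of the union of all proper qlc strata of $[C_x,\omega|_{C_x}]$; by the minimality just established, $x\in U$. Restricting the quasi-log resolution to $f^{-1}(U)$ yields a qlc pair $[U,\omega|_U]$ in which $U$ itself is the unique qlc stratum, and Lemma \ref{f-lem3.9} then gives that $U$ is normal. Hence $C_x$ is normal at $x$.

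The main obstacle is the blow-up step in (i): one must simultaneously arrange both $f^{-1}(C_1)$ and $f^{-1}(C_2)$ to be unions of strata while keeping $(Y,B_Y)$ a globally embedded simple normal crossing pair and without altering the quasi-log structure on $X$. This is standard in principle---via Hironaka-type resolutions of $M$ together with the analysis of how strata of $(Y,B_Y)$ transform under permissible blow-ups of the ambient space carried out in \cite[Proposition 4.1]{fujino-pull}---but it is the place where actual geometric input is needed; everything else reduces to bookkeeping with adjunction and Lemma \ref{f-lem3.9}.
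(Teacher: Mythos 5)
Your proof of (ii) is fine and is essentially the paper's: uniqueness from (i), and normality at $x$ by adjunction plus Lemma \ref{f-lem3.9} after discarding the qlc centers of $[C_x,\omega|_{C_x}]$, none of which passes through $x$ by minimality. The problem is (i), and it is exactly the step you flag as ``standard in principle'': it is not. What \cite[Proposition 4.1]{fujino-pull} (and the blow-up step in the proof of Theorem \ref{f-thm3.8}) provides is that the union of all strata of $(Y,B_Y)$ \emph{mapped into} a prescribed closed subset can be arranged to be a union of irreducible components of $Y$. It does \emph{not} make the full preimage $f^{-1}(C_i)$ a union of strata, and no sequence of permissible blow-ups of the ambient space can do so: a stratum of $(Y,B_Y)$ is an lc stratum of the normalized pair, and blow-ups cannot manufacture new lc strata, since the coefficient of a new exceptional divisor in the crepant pullback is governed by its log discrepancy and equals $1$ only over loci that were already lc centers. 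A concrete failure: take $X=\mathbb P^2$, $\omega=K_X+L$ for a line $L$, and let $f:Y\to X$ be the blow-up at a point of $L$ with exceptional curve $E$; then $B_Y=\widetilde L$ and $f^{-1}(L)=\widetilde L\cup E$, which is not a union of strata of $(Y,\widetilde L)$, and no further blow-up repairs this because every divisor extracted over $E\setminus \widetilde L$ has positive log discrepancy. Worse, if you retreat to what Proposition 4.1 actually gives (strata mapped \emph{into} $C_1\cap C_2$), you would then have to show that the images of those strata cover $C_1\cap C_2$ --- which is precisely the assertion of (i), so the argument becomes circular.

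The paper proves (i) by a quite different mechanism that you should compare with: fix $P\in C_1\cap C_2$ and consider the qlc pair $[X',\omega']$ with $X'=C_1\cup C_2$ given by adjunction (Theorem \ref{f-thm3.8}(i)). Since $X'$ has two irreducible components through $P$, it is not normal at $P$, so by Lemma \ref{f-lem3.9} (applied after removing $\Nqklt(X',\omega')$) one must have $P\in\Nqklt(X',\omega')$; hence there is a qlc stratum $C\ni P$ with $C\subsetneq C_1$ (say), and one iterates, terminating by dimension reasons, until a qlc stratum through $P$ contained in $C_1\cap C_2$ is found. In other words, the geometric input is the normality criterion of Lemma \ref{f-lem3.9} used contrapositively, not any manipulation of the quasi-log resolution. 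I would rewrite your proof of (i) along these lines; your proof of (ii) can then stand as written.
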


\begin{proof}
Let $C_1$ and $C_2$ be two qlc strata of $[X,\omega]$. 
We fix $P\in C_1\cap C_2$. 
It is enough to find a qlc stratum $C$ such that 
$P\in C\subset C_1\cap C_2$. The union 
$X'=C_1\cup C_2$ with 
$\omega'=\omega|_{X'}$ is a qlc pair having 
two 
irreducible components. 
Hence, it is not normal at $P$. 
By Lemma \ref{f-lem3.9}, 
$P\in \Nqklt (X', \omega')$. Therefore, 
there exists a qlc stratum $C$ 
such that $P\in C\subset X'$. 
We may assume that $C\subset C_1$ with $\dim C<\dim C_1$. 
If $C\subset C_2$, 
then we are done. 
Otherwise, we repeat the argument with $C_1=C$ and 
reach the conclusion in a finite number of 
steps. So, we finish the proof of (i). 
The uniqueness of the minimal qlc stratum follows 
from (i) and the normality of the minimal stratum 
follows from Lemma \ref{f-lem3.9}. 
Thus, we have (ii). 
\end{proof}

The following lemma is very useful for some applications. 
By Lemma \ref{f-lem3.12}, we can throw away the redundant components of 
$Y$ from the quasi-log resolution $f:(Y, B_Y)\to X$. 

\begin{lem}\label{f-lem3.12}
Let $\bigl(X, \omega, f:(Y, B_Y)\to X\bigr)$ be a quasi-log scheme as in Definition \ref{f-def3.3}. 
Then we can construct a new quasi-log resolution 
$f':(Y', B_{Y'})\to X$ such that 
\begin{itemize}
\item[(i)] $f':(Y', B_{Y'})\to X$ gives the same quasi-log structure as one 
given by $f:(Y, B_Y)\to X$, and 
\item[(ii)] every 
irreducible component of $Y'$ is mapped to $\overline{X\setminus 
X_{-\infty}}$, the closure of $X\setminus X_{-\infty}$ in $X$, 
by $f'$. 
\end{itemize} 
\end{lem}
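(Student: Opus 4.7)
The natural idea is to throw away the irreducible components of $Y$ whose images lie entirely in $X_{-\infty}$, and to verify that the restriction of $f$ to the remaining components furnishes the same quasi-log structure on $X$. Accordingly, let $Y'$ be the union of those irreducible components $Y_i$ of $Y$ such that $f(Y_i)\subseteq \overline{X\setminus X_{-\infty}}$, and let $Y''$ be the union of the remaining components. For any component $Y_i\subseteq Y''$, the image $f(Y_i)$ is irreducible closed in $X$ and is not contained in $\overline{X\setminus X_{-\infty}}$; since otherwise $f(Y_i)\cap (X\setminus X_{-\infty})$ would be a nonempty open subset of the irreducible set $f(Y_i)$ and hence dense in it, we conclude $f(Y_i)\subseteq X_{-\infty}$, and thus $f(Y'')\subseteq X_{-\infty}$. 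Put $B_{Y'}=B_Y|_{Y'}$ and $f'=f|_{Y'}$. Since $Y'$ is a union of components of the snc divisor $Y$ on the ambient smooth variety $M$, the pair $(Y',B_{Y'})$ is a globally embedded simple normal crossing pair with the same ambient $M$, and restricting $f^*\omega\sim_{\mathbb R}K_Y+B_Y$ to $Y'$ gives condition (1) of Definition \ref{f-def3.3}.

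\textbf{Main step.} The crux is condition (2) of Definition \ref{f-def3.3} for $f'$. Set $A=\lceil -(B_Y^{<1})\rceil$ and $N=\lfloor B_Y^{>1}\rfloor$, whose restrictions to $Y'$ coincide with $\lceil -(B_{Y'}^{<1})\rceil$ and $\lfloor B_{Y'}^{>1}\rfloor$. Mimicking the proof of Theorem \ref{f-thm3.8}(i), twist the short exact sequence $0\to \mathcal O_{Y''}(-Y')\to \mathcal O_Y\to \mathcal O_{Y'}\to 0$ by $\mathcal O_Y(A-N)$, apply $f_*$, and use
$$
(A-N-Y')|_{Y''}-\bigl(K_{Y''}+\{B_{Y''}\}+B_{Y''}^{=1}-Y'|_{Y''}\bigr)\sim_{\mathbb R}-(f^*\omega)|_{Y''}
$$
together with the Koll\'ar-type torsion-free theorem (\cite[Theorem 1.1]{fujino-vanishing}) to conclude that no associated prime of $R^1f_*\mathcal O_{Y''}(A-N-Y')$ is contained in $f(Y')$. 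Combined with $f_*\mathcal O_Y(A-N)\cong \mathcal I_{X_{-\infty}}$ and the fact that $f_*\mathcal O_{Y''}(A-N-Y')$ is supported on $f(Y'')\subseteq X_{-\infty}$, a diagram chase identifies the restriction map $f_*\mathcal O_Y(A-N)\to f'_*\mathcal O_{Y'}(A|_{Y'}-N|_{Y'})$ with the desired isomorphism $\mathcal I_{X_{-\infty}}\overset{\simeq}{\to} f'_*\mathcal O_{Y'}(A|_{Y'}-N|_{Y'})$. The parallel computation with $N$ omitted identifies the natural map $\mathcal O_X\to f'_*\mathcal O_{Y'}(A|_{Y'})$, completing the verification of condition (2).

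\textbf{Strata and obstacle.} Finally, every stratum of $(Y'',B_{Y''})$ maps into $X_{-\infty}$ by Step 1, so the strata of $(Y',B_{Y'})$ that are not mapped into $X_{-\infty}$ coincide exactly with those of $(Y,B_Y)$ that are not mapped into $X_{-\infty}$; this reproduces the same collection $\{C\}$ of qlc strata and yields the same defining ideal $\mathcal I_{X_{-\infty}}$, so $f':(Y',B_{Y'})\to X$ gives the original quasi-log structure, establishing (i), while (ii) is immediate from the construction of $Y'$. The principal technical hurdle is the diagram chase in the main step, where one must rule out any contribution of $Y''$ to the structure sheaf comparison; this is precisely the place where the Koll\'ar-type torsion-free theorem, applied just as in the proof of Theorem \ref{f-thm3.8}(i), does the essential work.
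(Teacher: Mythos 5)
Your overall strategy is the paper's: split $Y=Y'\cup Y''$ according to whether components map into $\overline{X\setminus X_{-\infty}}$, push forward the twisted ideal sequence, and kill the connecting homomorphism by torsion-freeness. But there is a genuine gap at the decisive step. You assert that the Koll\'ar-type torsion-free theorem yields that no associated prime of $R^1f_*\mathcal O_{Y''}(A-N-Y')$ is contained in $f(Y')$. What that theorem actually gives is that every associated prime is the generic point of the $f$-image of some stratum of the pair $(Y'',\{B_{Y''}\}+B_{Y''}^{=1}-Y'|_{Y''})$. Besides the irreducible components of $Y''$ (whose images are indeed not contained in $f(Y')=\overline{X\setminus X_{-\infty}}$), this pair has lower-dimensional strata, for instance the intersection of a component of $Y''$ with a coefficient-one component of $B_Y$, and nothing in your setup prevents such a stratum from being mapped into $\overline{X\setminus X_{-\infty}}\cap X_{-\infty}\subseteq f(Y')$. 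In that case the cited theorem does not exclude an associated prime inside $f(Y')\cap X_{-\infty}$, which is exactly where the image of $\delta:f_*\mathcal O_{Y'}(A-N)\to R^1f_*\mathcal O_{Y''}(A-N-Y')$ would be supported; so you cannot conclude $\delta=0$, and the identification $\mathcal I_{X_{-\infty}}\simeq f'_*\mathcal O_{Y'}(A-N)$ is not established.

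This is precisely why the paper does not work with the given $(Y,B_Y)$ but first modifies it by blow-ups of the ambient space $M$ via \cite[Proposition 4.1]{fujino-pull}: in addition to arranging that the strata not mapped to $\overline{X\setminus X_{-\infty}}$ form a union of irreducible components (which is essentially your decomposition), it arranges that the union of all strata of $(Y,B_Y)$ mapped to $\overline{X\setminus X_{-\infty}}\cap X_{-\infty}$ is also a union of irreducible components. Any such component maps into $\overline{X\setminus X_{-\infty}}$ and hence lies in $Y'$, so the term $-Y'|_{Y''}$ removes these problematic strata from the pair on $Y''$, and only then does the torsion-freeness argument go through. Note that your situation differs from the proof of Theorem \ref{f-thm3.8} (i), which you invoke as a model: there $Y'$ is the union of \emph{all} strata mapped into $X'$, so by construction no stratum of the complementary pair maps into $X'$, whereas here $Y''$ collects the strata \emph{not} mapped into $\overline{X\setminus X_{-\infty}}$, which still leaves room inside $Y''$ for smaller strata mapping into $\overline{X\setminus X_{-\infty}}\cap X_{-\infty}$. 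Inserting the preliminary application of \cite[Proposition 4.1]{fujino-pull} (legitimate, since the lemma only asks for \emph{some} new quasi-log resolution) repairs the argument; the rest of your proof then matches the paper's.
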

\begin{proof}
Let $M$ be the ambient space of $(Y, B_Y)$. 
By taking some blow-ups of $M$, we may assume that 
the union of all strata of $(Y, B_Y)$ that are not 
mapped 
to $\overline{X\setminus X_{-\infty}}$, which is denoted by $Y''$, 
is a union of some irreducible components of $Y$ (see 
\cite[Proposition 4.1]{fujino-pull}). 
We put $Y'=Y-Y''$ and $K_{Y''}+B_{Y''}=(K_Y+B_Y)|_{Y''}$. 
We may further assume that the union of 
all strata of $(Y, B_Y)$ mapped to 
$\overline {X\setminus X_{-\infty}}
\cap X_{-\infty}$ is a union of some irreducible components of $Y$ by 
\cite[Proposition 4.1]{fujino-pull}. 
We consider the short exact sequence 
$$
0\to \mathcal O_{Y''}(-Y')\to \mathcal O_Y\to \mathcal O_{Y'}\to 0. 
$$ 
We put $A=\lceil -(B_Y^{<1})\rceil$ and $N=\lfloor B_Y^{>1}\rfloor$. By 
applying $\otimes \mathcal O_Y(A-N)$, we have 
$$
0\to \mathcal O_{Y''}(A-N-Y')\to \mathcal O_Y(A-N)\to \mathcal O_{Y'}
(A-N)\to 0. 
$$
By taking $f_*$, we obtain 
\begin{align*}
0&\to f_* \mathcal O_{Y''}(A-N-Y')\to f_*\mathcal O_Y(A-N)\to f_* \mathcal O_{Y'}
(A-N)\\
&\to R^1f_*\mathcal O_{Y''}(A-N-Y')\to \cdots.
\end{align*}
By \cite[Theorem 1.1]{fujino-vanishing} and \cite[Theorem 2.39]{fujino-book} 
(see also \cite[Theorem 5.6.3]{fujino-foundation}), 
no associated prime of $R^1f_*\mathcal O_{Y''}(A-N-Y')$ is contained 
in $f(Y')\cap X_{-\infty}$. 
Note that 
\begin{align*}
(A-N-Y')|_{Y''}-(K_{Y''}+\{B_{Y''}\}+B_{Y''}^{=1}
-Y'|_{Y''})
&=-(K_{Y''}+B_{Y''})
\\&\sim _{\mathbb R} -(f^*\omega)|_{Y''}. 
\end{align*} 
Therefore, the connecting homomorphism 
$$\delta:f_*\mathcal O_{Y'}(A-N)\to R^1f_*\mathcal O_{Y''}(A-N-Y')$$ is zero. 
This implies that 
$$
0\to f_*\mathcal O_{Y''}(A-N-Y')\to \mathcal I_{X_{-\infty}}\to f_*\mathcal O_{Y'}
(A-N)\to 0 
$$ 
is exact. 
The ideal sheaf $\mathcal J=f_*\mathcal O_{Y''}
(A-N-Y')$ is zero 
when it is restricted to $X_{-\infty}$ because 
$\mathcal J\subset \mathcal I_{X_{-\infty}}$. 
On the other hand, $\mathcal J$ is zero on 
$X\setminus X_{-\infty}$ because 
$f(Y'')\subset X_{-\infty}$. 
Therefore, we obtain $\mathcal J=0$. 
Thus we have $\mathcal I_{X_{-\infty}}=f_*\mathcal O_{Y'}(A-N)$. 
So $f'=f|_{Y'}:(Y', B_{Y'})\to X$, where 
$K_{Y'}+B_{Y'}=(K_Y+B_Y)|_{Y'}$, 
gives the same quasi-log structure as one 
given by $f:(Y, B_Y)\to X$ with the property (ii). 
\end{proof}

Lemma \ref{f-lem3.13} is obvious. We will sometimes use it implicitly 
in the theory of quasi-log schemes. 

\begin{lem}\label{f-lem3.13}
Let $[X, \omega]$ be a quasi-log scheme. 
Assume that $X=V\cup X_{-\infty}$ and 
$V\cap X_{-\infty}=\emptyset$. 
Then $[V, \omega|_V]$ is a quasi-log scheme with only quasi-log 
canonical singularities. 
\end{lem}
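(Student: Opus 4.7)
The plan is to restrict the quasi-log data on $X$ to $V$ after first using Lemma \ref{f-lem3.12} to arrange that no irreducible component of the source lies over $X_{-\infty}$.

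I would begin by observing that the hypotheses force $\overline{X\setminus X_{-\infty}}=V$. Indeed, $V$ is a finite union of qlc strata of $[X,\omega]$, each of which is a closed subvariety of $X$, so $V$ is closed in $X$; together with $V\cap X_{-\infty}=\emptyset$ and $V\cup X_{-\infty}=X$, this gives $V=X\setminus X_{-\infty}$ as sets. In particular, $V$ is both open and closed in $X$, and $X=V\sqcup X_{-\infty}$ topologically. Applying Lemma \ref{f-lem3.12} to the given quasi-log structure, I obtain a quasi-log resolution $f':(Y',B_{Y'})\to X$ inducing the same quasi-log structure on $[X,\omega]$, with every irreducible component of $Y'$ mapped into $\overline{X\setminus X_{-\infty}}=V$. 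Hence $f'$ factors uniquely as $f'=\iota\circ g$, where $\iota:V\hookrightarrow X$ is the closed embedding and $g:Y'\to V$ is proper.

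I then claim that $g:(Y',B_{Y'})\to V$ defines a quasi-log structure on $[V,\omega|_V]$ with $V_{-\infty}=\emptyset$. I would verify the three conditions of Definition \ref{f-def3.3}. Condition (1) is immediate from $g^*(\omega|_V)=f'^*\omega\sim_{\mathbb R}K_{Y'}+B_{Y'}$. For condition (2), I apply $\iota^*$ to the isomorphism
$$
\mathcal I_{X_{-\infty}}\overset{\simeq}{\longrightarrow}f'_*\mathcal O_{Y'}(\lceil -(B_{Y'}^{<1})\rceil-\lfloor B_{Y'}^{>1}\rfloor)
$$
on $X$: since $V\cap X_{-\infty}=\emptyset$ we have $\iota^*\mathcal I_{X_{-\infty}}=\mathcal O_V$, and since $f'$ factors through $V$ the right-hand side equals $\iota_*g_*\mathcal O_{Y'}(\lceil -(B_{Y'}^{<1})\rceil-\lfloor B_{Y'}^{>1}\rfloor)$, so pulling back yields
$$
\mathcal O_V\overset{\simeq}{\longrightarrow}g_*\mathcal O_{Y'}(\lceil -(B_{Y'}^{<1})\rceil-\lfloor B_{Y'}^{>1}\rfloor),
$$
which is exactly the condition that $V_{-\infty}=\emptyset$. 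Condition (3) is clear: the strata of $(Y',B_{Y'})$ are unchanged and their images all lie in $V$, so they are precisely the qlc strata of $[V,\omega|_V]$. By Definition \ref{f-def3.6}, $[V,\omega|_V]$ is a qlc pair.

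There is essentially no obstacle here once Lemma \ref{f-lem3.12} is invoked: the entire content is the observation that Lemma \ref{f-lem3.12} produces a resolution whose image already lies in $V$, after which the disjointness hypothesis $V\cap X_{-\infty}=\emptyset$ forces the sheaf-theoretic condition (2) to restrict to the desired isomorphism $\mathcal O_V\simeq g_*\mathcal O_{Y'}(\lceil -(B_{Y'}^{<1})\rceil-\lfloor B_{Y'}^{>1}\rfloor)$.
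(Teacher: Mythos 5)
Your proof is correct. Note that the paper offers no argument at all here: it simply declares Lemma \ref{f-lem3.13} obvious, so there is no official proof to compare against. Your route --- first invoking Lemma \ref{f-lem3.12} to discard the components of $Y$ lying over $X_{-\infty}$, then factoring the resulting resolution through $V$ and restricting conditions (1)--(3) of Definition \ref{f-def3.3} --- is a perfectly good way to make the ``obvious'' precise, and it mirrors how the author handles the analogous situation in Lemma \ref{f-lem3.14}. Two small points. First, the key fact you should emphasize is that $V$ is \emph{open} in $X$ (being the complement of the closed set $X_{-\infty}$), not just closed; the factorization of $f'$ through an open immersion is automatic once $f'(Y')\subset V$, whereas factoring through a closed subscheme would require checking that the ideal pulls back to zero, and moreover the natural scheme structure on $V$ here is the open (clopen direct-factor) one rather than the reduced closed structure used for $X^\dag$ in Lemma \ref{f-lem3.14}. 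Second, because $V$ is open, one can in fact bypass Lemma \ref{f-lem3.12} entirely: restrict the original quasi-log resolution over the open set $V$, using that $f_*$ commutes with restriction to open subsets, so that $\mathcal I_{X_{-\infty}}|_V=\mathcal O_V$ immediately gives $V_{-\infty}=\emptyset$. Your version buys a globally defined resolution with source all of $Y'$, which is slightly cleaner to state; the direct restriction is shorter. Either way the argument is sound.
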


By using Lemma \ref{f-lem3.12}, we obtain Lemma \ref{f-lem3.14}. 
Roughly speaking, by Lemma \ref{f-lem3.14}, 
we can through away the irreducible components of 
$X$ contained in $X_{-\infty}$ from the quasi-log pair $[X, \omega]$. 

\begin{lem}\label{f-lem3.14}
Let $[X, \omega]$ be a quasi-log scheme. 
We consider $X^\dag=\overline {X\setminus X_{-\infty}}$, 
the closure of $X\setminus X_{-\infty}$ in $X$, with the 
reduced scheme structure. 
Then $[X^\dag, \omega^\dag]$, where $\omega^\dag=\omega|_{X^\dag}$, 
has a natural quasi-log structure induced by $[X, \omega]$. 
This means that 
\begin{itemize}
\item[(i)] $C$ is a qlc stratum of $[X, \omega]$ if and only if 
$C$ is a qlc stratum of $[X^\dag, \omega^\dag]$, and 
\item[(ii)] $\mathcal I_{\Nqlc(X, \omega)}=\mathcal I_{\Nqlc (X^\dag, \omega^\dag)}$. 
\end{itemize}
\end{lem}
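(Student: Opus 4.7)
The plan is to apply Lemma~\ref{f-lem3.12} to replace the given quasi-log resolution $f\colon(Y,B_Y)\to X$ by $f'\colon(Y',B_{Y'})\to X$, yielding the same quasi-log structure on $X$ but with the additional feature that every irreducible component of $Y'$ maps into $X^\dag$. Since $f'(Y')\subset X^\dag$, the morphism $f'$ factors through the closed immersion $\iota\colon X^\dag\hookrightarrow X$ as $f'=\iota\circ\bar f'$, and $\bar f'\colon(Y',B_{Y'})\to X^\dag$ will serve as the candidate quasi-log resolution for $[X^\dag,\omega^\dag]$. Condition~(1) of Definition~\ref{f-def3.3} is immediate from $(\bar f')^*\omega^\dag=f'^*\omega\sim_{\mathbb R}K_{Y'}+B_{Y'}$.

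The main verification is condition~(2). Write $A=\lceil-(B_{Y'}^{<1})\rceil$ and $N=\lfloor B_{Y'}^{>1}\rfloor$. The key observation is that the natural map $\alpha\colon\mathcal O_X\to f'_*\mathcal O_{Y'}(A)$ annihilates $\mathcal I_{X^\dag}$: any local section of $\mathcal I_{X^\dag}$ pulls back to zero on $Y'$ because $f'(Y')\subset X^\dag$. Hence $\alpha$ descends through the surjection $\pi\colon\mathcal O_X\twoheadrightarrow\iota_*\mathcal O_{X^\dag}$ to a map $\alpha^\dag\colon\mathcal O_{X^\dag}\to\bar f'_*\mathcal O_{Y'}(A)$. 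Combining $\mathcal I_{X^\dag}\subset\ker\alpha$ with the identification $\mathcal I_{X_{-\infty}}\simeq f'_*\mathcal O_{Y'}(A-N)$ on $X$ forces $\mathcal I_{X^\dag}\subset\mathcal I_{X_{-\infty}}$, i.e.\ $X_{-\infty}\subset X^\dag$ as closed subschemes. Consequently $\pi$ carries $\mathcal I_{X_{-\infty}}$ onto the ideal $\mathcal I_{X_{-\infty}}/\mathcal I_{X^\dag}$ of $\mathcal O_{X^\dag}$, and a short diagram chase—lifting local sections on $X^\dag$ to sections on $X$ and invoking the isomorphism there—shows that $\alpha^\dag$ restricts to an isomorphism from this ideal onto $\bar f'_*\mathcal O_{Y'}(A-N)$. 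This verifies condition~(2) and exhibits $(X^\dag)_{-\infty}$ as the subscheme of $X^\dag$ cut out by $\mathcal I_{X_{-\infty}}/\mathcal I_{X^\dag}$; statement~(ii) then follows because $\pi^{-1}(\iota_*\mathcal I_{(X^\dag)_{-\infty}})=\mathcal I_{X_{-\infty}}$.

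For condition~(3) of Definition~\ref{f-def3.3} and property~(i), observe that after Lemma~\ref{f-lem3.12} the strata of $(Y',B_{Y'})$ are precisely the strata of $(Y,B_Y)$ whose images lie in $X^\dag$. A qlc stratum of $[X,\omega]$ is the image of a stratum of $(Y,B_Y)$ not contained in $X_{-\infty}$, and any such image automatically lies in $X^\dag$; conversely, the image of a stratum of $(Y',B_{Y'})$ not mapped into $(X^\dag)_{-\infty}=X_{-\infty}$ is exactly such a stratum, so the two collections of qlc strata coincide. The main obstacle is condition~(2): one must first observe that $\alpha$ descends through $\iota$—which in turn forces $X_{-\infty}\subset X^\dag$—before the isomorphism on $X$ can be transferred to the correct ideal on $X^\dag$. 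Once this descent is in place, the remainder is routine ideal-sheaf bookkeeping, parallel to the argument in the proof of Lemma~\ref{f-lem3.12}.
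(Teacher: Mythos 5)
Your overall route is the paper's: replace $f$ by the resolution $f'$ of Lemma~\ref{f-lem3.12}, factor it through $X^\dag$, and transfer condition (2) of Definition~\ref{f-def3.3} along $\mathcal O_X\twoheadrightarrow\mathcal O_{X^\dag}$. But the pivotal deduction is wrong. From $\mathcal I_{X^\dag}\subset\ker\alpha$ and the fact that $\alpha$ restricts to an \emph{isomorphism} $\mathcal I_{X_{-\infty}}\to f'_*\mathcal O_{Y'}(A-N)$ (in particular is injective on $\mathcal I_{X_{-\infty}}$), what follows is
$$\mathcal I_{X^\dag}\cap\mathcal I_{X_{-\infty}}\subset\ker\bigl(\alpha|_{\mathcal I_{X_{-\infty}}}\bigr)=\{0\},$$
not $\mathcal I_{X^\dag}\subset\mathcal I_{X_{-\infty}}$. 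Indeed, your containment combined with $\mathcal I_{X^\dag}\subset\ker\alpha$ would give $\mathcal I_{X^\dag}=\mathcal I_{X^\dag}\cap\mathcal I_{X_{-\infty}}=\{0\}$, i.e.\ $X^\dag=X$, so your argument is internally inconsistent outside the trivial case. The asserted geometric consequence $X_{-\infty}\subset X^\dag$ is also false in general: $X$ may have irreducible components entirely contained in $X_{-\infty}$ (cf.\ Lemma~\ref{f-lem3.13}, where $X_{-\infty}$ is even disjoint from $X^\dag$), and discarding exactly such components is the stated purpose of Lemma~\ref{f-lem3.14}; if $X_{-\infty}\subset X^\dag$ always held, the lemma would be nearly contentless.

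The correct statement $\mathcal I_{X^\dag}\cap\mathcal I_{X_{-\infty}}=\{0\}$ is what the paper establishes (using $\mathcal I_{X_{-\infty}}\simeq f'_*\mathcal O_{Y'}(-N)$ together with $f'(N)=X_{-\infty}\cap X^\dag$), and it is what statement (ii) genuinely requires: it says that the composition $\mathcal I_{X_{-\infty}}\hookrightarrow\mathcal O_X\twoheadrightarrow\mathcal O_{X^\dag}$ is \emph{injective}, so that $\mathcal I_{X_{-\infty}}$ is carried isomorphically onto an ideal of $\mathcal O_{X^\dag}$, which is then declared to be $\mathcal I_{X^\dag_{-\infty}}$. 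Under your containment the image would instead be the proper quotient $\mathcal I_{X_{-\infty}}/\mathcal I_{X^\dag}$ whenever $X^\dag\neq X$, and (ii) would fail as an identification of sheaves. Once the containment is replaced by the vanishing of the intersection, your diagram chase does yield condition (2) and (ii), and your verification of condition (1) and of the statement about strata in (3)/(i) is correct and agrees with the paper.
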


\begin{proof}
Let $\mathcal I_{X^\dag}$ be the defining ideal sheaf of $X^\dag$ on $X$. 
Let $f':(Y', B_{Y'})\to X$ be the quasi-log resolution constructed in the proof of 
Lemma \ref{f-lem3.12}. 
Note that 
\begin{align*}
\mathcal I_{X_{-\infty}}&\simeq f'_*\mathcal O_{Y'}(A-N)\\
&\simeq f'_*\mathcal O_{Y'}(-N)
\end{align*} 
and that 
$$
f'(N)=X_{-\infty}\cap f'(Y')=X_{-\infty}\cap X^\dag
$$ 
set theoretically, where 
$A=\lceil -(B_{Y'}^{<1})\rceil$ and $N=\lfloor B_{Y'}^{>1}\rfloor$ 
(see \cite[Remark 3.8]{fujino-pull}). 
Therefore, we obtain 
$$\mathcal I_{X^\dag}\cap \mathcal I_{X_{-\infty}}
=\mathcal I_{X^\dag}\cap f'_*\mathcal O_{Y'}(-N')=\{0\}.$$ 
Thus we can construct the following big commutative diagram. 
$$
\xymatrix{&&0\ar[d]&0\ar[d]&
\\&&\mathcal I_{X_{-\infty}}\ar@{=}[r]\ar[d]& \mathcal I_{X^\dag_{-\infty}}\ar[d]&\\
0\ar[r]&\mathcal I_{X^\dag}\ar[r]\ar@{=}[d]&\mathcal O_X\ar[r]\ar[d]
&\mathcal O_{X^\dag}\ar[r]\ar[d]&0\\
0\ar[r]&\mathcal I_{X^\dag}\ar[r]&\mathcal O_{X_{-\infty}}\ar[r]\ar[d]
&\mathcal O_{X^\dag_
{-\infty}}\ar[d]\ar[r]&0\\
&&0&0&
}
$$
By construction, $f'$ factors through $X^\dag$. 
We put $g:(Y', B_{Y'})\to X^\dag$. 
Then $g:(Y', B_{Y'})\to X^\dag$ gives the desired quasi-log structure 
on $[X^\dag, \omega^\dag]$. 
\end{proof}

We need Lemma \ref{f-lem3.15} in the proof of Theorem \ref{f-thm1.1}. 

\begin{lem}\label{f-lem3.15}
Let $[X, \omega]$ be a quasi-log scheme and let $E$ be an effective $\mathbb R$-Cartier 
$\mathbb R$-divisor on $X$. 
We put 
$$\widetilde \omega=\omega+\varepsilon E$$ with $0<\varepsilon \ll 1$. 
Then $[X, \widetilde \omega]$ has a natural quasi-log structure 
with the following properties. 
\begin{itemize}
\item[(i)] Let $\{C_i\}_{i\in I}$ be the set of 
qlc strata of $[X, \omega]$ contained in $\Supp E$. 
We put $$X^{\star}=(\underset{i\in I}{\cup}C_i)\cup \Nqlc(X, \omega)$$ 
as in Theorem \ref{f-thm3.8}. 
Then $\Nqlc(X, \widetilde \omega)$ coincides with $X^{\star}$ scheme theoretically. 
\item[(ii)] $C$ is a qlc stratum of 
$[X, \widetilde \omega]$ if and only if $C$ is a qlc stratum of $[X, \omega]$ with $C
\not\subset \Supp E$.  
\end{itemize}
\end{lem}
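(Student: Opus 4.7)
The plan is to retain the same underlying morphism $f:Y\to X$ of a quasi-log resolution of $[X,\omega]$ and modify the boundary by adding $\varepsilon f^*E$, then verify that this yields a quasi-log structure on $[X,\widetilde\omega]$ with $\Nqlc(X,\widetilde\omega)=X^\star$. First I would take a quasi-log resolution $f:(Y,B_Y)\to X$ with ambient space $M$. Using \cite[Proposition~4.1]{fujino-pull} repeatedly, after blowing up $M$ I may arrange simultaneously that $\Supp B_Y+\Supp f^*E$ is simple normal crossing on $M$, that the union $Y^\star$ of strata of $(Y,B_Y)$ mapped into $X^\star$ is a union of irreducible components of $Y$ (set $Y^\sharp:=Y-Y^\star$), and that for each qlc stratum $C_i\subset \Supp E$ of $[X,\omega]$ there is an irreducible component of $B_Y^{=1}$ contained in $\Supp f^*E$ mapping onto $C_i$. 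Set $B_{Y,\varepsilon}:=B_Y+\varepsilon f^*E$. For $0<\varepsilon\ll 1$ this is a globally embedded simple normal crossing pair with $K_Y+B_{Y,\varepsilon}\sim_{\mathbb R}f^*\widetilde\omega$, and a direct inspection of coefficients gives
\[
\lceil -(B_{Y,\varepsilon}^{<1})\rceil = A, \qquad B_{Y,\varepsilon}^{=1}=B_Y^{=1}-F, \qquad \lfloor B_{Y,\varepsilon}^{>1}\rfloor = N+F,
\]
where $A:=\lceil -(B_Y^{<1})\rceil$, $N:=\lfloor B_Y^{>1}\rfloor$, and $F$ is the sum of the irreducible components of $B_Y^{=1}$ contained in $\Supp f^*E$. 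In particular $F\subset Y^\star$.

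The essential step is to prove $\mathcal{I}_{X^\star}\simeq f_*\mathcal{O}_Y(A-N-F)$ as subsheaves of $f_*\mathcal{O}_Y(A)$. I would push forward the short exact sequence
\[
0\to \mathcal{O}_Y(A-N-F)\to \mathcal{O}_Y(A-N)\to \mathcal{O}_F(A-N)\to 0.
\]
Setting $\Theta:=\{B_Y\}+B_Y^{=1}-F$, which is a boundary $\mathbb R$-divisor with simple normal crossing support, the relation $(A-N-F)-(K_Y+\Theta)\sim_{\mathbb R}-f^*\omega$ lets me invoke the torsion-free theorem (\cite[Theorem~1.1]{fujino-vanishing} and \cite[Theorem~5.6.3]{fujino-foundation}) to conclude that every associated prime of $R^1f_*\mathcal{O}_Y(A-N-F)$ is the $f$-image of a stratum of $(Y,B_Y^{=1}-F)$. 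The third arrangement above ensures no such stratum is mapped into $f(F)$, so the connecting homomorphism $f_*\mathcal{O}_F(A-N)\to R^1f_*\mathcal{O}_Y(A-N-F)$ vanishes. Comparing the resulting short exact sequence with the adjunction sequence from Theorem~\ref{f-thm3.8}(i) for $X^\star\subset X$ in $[X,\omega]$ — which yields $\mathcal{I}_{X^\star}\simeq f_*\mathcal{O}_{Y^\sharp}(A-N-Y^\star)$ and $\mathcal{I}_{X_{-\infty}}/\mathcal{I}_{X^\star}\simeq f_*\mathcal{O}_{Y^\star}(A-N)$ — I can identify $f_*\mathcal{O}_F(A-N)$ with $\mathcal{I}_{X_{-\infty}}/\mathcal{I}_{X^\star}$ and conclude $f_*\mathcal{O}_Y(A-N-F)\simeq \mathcal{I}_{X^\star}$. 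Together with the earlier computations this verifies condition~(2) of Definition~\ref{f-def3.3} for $[X,\widetilde\omega]$ and establishes~(i).

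Property~(ii) is then a combinatorial check: the strata of $(Y,B_{Y,\varepsilon})$ are exactly those strata of $(Y,B_Y)$ whose defining components avoid $F$, and among these the ones whose $f$-image is not contained in $X^\star$ are precisely the qlc strata of $[X,\omega]$ that are not contained in $\Supp E$. The main obstacle is the cohomological comparison: verifying the vanishing of the connecting map via the careful blow-up arrangement together with the torsion-free theorem, and matching $f_*\mathcal{O}_F(A-N)$ with $\mathcal{I}_{X_{-\infty}}/\mathcal{I}_{X^\star}$ through the diagram chase — this parallels and extends the proof of Theorem~\ref{f-thm3.8}(i).
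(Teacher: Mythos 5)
There is a genuine gap. Your construction keeps the whole of $Y$ and only pushes the divisor $F$ (the components of $B_Y^{=1}$ contained in $\Supp f^*E$) into the non-qlc part. But $X^{\star}$ is swept out by \emph{all} strata of $(Y,B_Y)$ that map into $\Supp E$ or into $X_{-\infty}$: this includes irreducible components of $Y$ and non-divisorial strata (e.g.\ an intersection $D_1\cap D_2$ of two components of $B_Y^{=1}$ lying in $\Supp f^*E$ while neither $D_i$ does), none of which are accounted for by $F$. If some $C_i$ admits no divisorial lc place inside $\Supp f^*E$, then $F=0$ over $C_i$, the sheaf $f_*\mathcal O_Y(A-N-F)$ does not vanish along $C_i$, and the corresponding stratum of $(Y,B_Y+\varepsilon f^*E)$ survives with image $C_i\subset \Supp E$ --- contradicting both (i) and (ii). Your proposed ``third arrangement'' does not repair this: \cite[Proposition 4.1]{fujino-pull} converts strata into irreducible \emph{components of $Y$}, not into components of $B_Y^{=1}$, and once a component of $Y$ maps into $\Supp E$ the pullback $f^*E$ is not even defined as a divisor on it, so $(Y,B_Y+\varepsilon f^*E)$ fails to be a globally embedded simple normal crossing pair. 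For the same reason the key identification $f_*\mathcal O_F(A-N)\simeq \mathcal I_{X_{-\infty}}/\mathcal I_{X^\star}\simeq f_*\mathcal O_{Y^\star}(A-N)$ breaks down: $F$ is in general a proper subset of $Y^\star$, so the two pushforwards differ and the comparison with the adjunction sequence does not close.

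The missing idea, which is exactly what the paper does, is to \emph{discard} the offending part of $Y$ before perturbing the boundary: use \cite[Proposition 4.1]{fujino-pull} to make the union $Y''$ of all strata mapping into $X^{\star}$ a union of irreducible components, set $Y'=Y-Y''$, and equip only $Y'$ with the boundary $B_{Y'}+\varepsilon f^*E$. Then no stratum of $(Y',B_{Y'})$ lies in $\Supp f^*E$, so the round-up/round-down bookkeeping is clean, and one checks directly that $\lceil -(B_{Y'}+\varepsilon f^*E)^{<1}\rceil-\lfloor (B_{Y'}+\varepsilon f^*E)^{>1}\rfloor=(A-N-Y'')|_{Y'}$; the ideal $f_*\mathcal O_{Y'}(A-N-Y'')$ is already known to equal $\mathcal I_{X^{\star}}$ from the proof of Theorem \ref{f-thm3.8}, so no new torsion-freeness or connecting-map argument is needed. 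Your overall strategy (perturb the boundary by $\varepsilon f^*E$ and identify the new non-qlc ideal) is the right one, but without first removing $Y''$ the argument does not go through.
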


\begin{proof}
Let $f:(Y, B_Y)\to X$ be a quasi-log resolution as in Definition \ref{f-def3.3}. 
By \cite[Proposition 4.1]{fujino-pull}, 
the union of 
all strata of $(Y, B_Y)$ mapped to $X^{\star}$, which is 
denoted by $Y''$, is a union of 
some irreducible components of $Y$. 
We put $Y'=Y-Y''$ and $K_{Y'}+B_{Y'}=(K_Y+B_Y)|_{Y'}$. 
We may further assume that $(Y', B_{Y'}+f^*E)$ is 
a globally embedded simple normal crossing pair by \cite[Proposition 4.1]{fujino-pull}. 
We consider $f:(Y', B_{Y'}+\varepsilon f^*E)\to X$ with 
$0<\varepsilon \ll 1$. 
We put $A=\lceil -(B_Y^{<1})\rceil$ and $N=\lfloor B_{Y}^{>1}\rfloor$. 
Then $X^{\star}$ is defined by the ideal 
sheaf $f_*\mathcal O_{Y'}(A-N-Y'')$ (see the 
proof of Theorem \ref{f-thm3.8}). 
Note that 
\begin{align*}
(A-N-Y'')|_{Y'}
&=-\lfloor B_{Y'}+\varepsilon f^*E\rfloor+(B_{Y'}+\varepsilon f^*E)^{=1}
\\&=\lceil -(B_{Y'}+\varepsilon f^*E)^{<1}\rceil 
-\lfloor (B_{Y'}+\varepsilon f^*E)^{>1}\rfloor. 
\end{align*}
Therefore, if we define $\Nqlc (X, \widetilde \omega)$ by the ideal sheaf 
$$
f_*\mathcal O_{Y'}(\lceil -(B_{Y'}+\varepsilon f^*E)^{<1}\rceil 
-\lfloor (B_{Y'}+\varepsilon f^*E)^{>1}\rfloor)=f_*\mathcal O_{Y'}
(A-N-Y''), 
$$ 
then $f:(Y', B_{Y'}+\varepsilon f^*E)\to X$ 
gives the desired quasi-log structure on $[X, \widetilde \omega]$. 
\end{proof}

The following lemma is 
a slight generalization of \cite[Lemma 3.71]{fujino-book}, which 
played a crucial role in the proof of the rationality theorem for 
quasi-log schemes (see \cite[Theorem 3.68]{fujino-book} 
and \cite[Theorem 6.6.1]{fujino-foundation}). 

\begin{lem}[{see \cite[Lemma 3.71]{fujino-book} and 
\cite[Lemma 6.3.9]{fujino-foundation}}]\label{f-lem3.16} 
Let $[X, \omega]$ be a quasi-log scheme with 
$X_{-\infty}=\emptyset$ and 
let $x\in X$ be a closed point. 
Let $D_1, D_2, \cdots, D_k$ be effective Cartier divisors on $X$ such that 
$x\in \Supp D_i$ for every $i$. 
Let $f:(Y, B_Y)\to X$ be a quasi-log resolution. 
Assume that the normalization 
of $(Y, B_Y+\sum _{i=1}^k f^*D_i)$ is sub log canonical. 
This means that $(Y^\nu, \Xi)$ is sub log canonical, where 
$\nu:Y^\nu\to Y$ is the normalization and 
$K_{Y^\nu}+\Xi=\nu^*(K_Y+B_Y+\sum _{i=1}^kf^*D_i)$. 
Note that it requires that no irreducible component of $Y$ is mapped 
into $\cup _{i=1}^k \Supp D_i$. 
Then $k\leq \dim _xX$. 
More precisely, $k\leq \dim _xC_x$, where 
$C_x$ is the minimal qlc stratum of $[X, \omega]$ passing 
through $x$.  
\end{lem}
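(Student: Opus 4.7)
I would follow the strategy of \cite[Lemma 3.71]{fujino-book}, proceeding by reduction to the minimal qlc stratum and then a multiplicity estimate on a good point of the quasi-log resolution. By Theorem \ref{f-thm3.11}(ii), the minimal qlc stratum $C_x$ passing through $x$ exists and is normal at $x$; by Theorem \ref{f-thm3.8}(i), $C_x$ inherits a quasi-log structure $[C_x, \omega|_{C_x}]$ whose quasi-log resolution is obtained from $(Y, B_Y)$ by restricting to the union of strata mapping to $C_x$. The divisors $D_i|_{C_x}$ are effective Cartier through $x$, and the sub log canonical hypothesis on $(Y,B_Y+\sum f^*D_i)$ descends to the new resolution. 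Since $\dim_x C_x \leq \dim_x X$, it suffices to prove $k \leq \dim X$ under the additional assumption that $X = C_x$ is the unique qlc stratum of $[X,\omega]$ passing through $x$.

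Using \cite[Proposition 4.1]{fujino-pull}, I would then modify $(Y, B_Y)$ so that $\Supp(B_Y + \sum_{i=1}^k f^*D_i)$ is simple normal crossing on the ambient smooth variety $M$, so that some irreducible component $Z$ of $Y$ maps birationally onto $X$ via $f$, and so that the negative-coefficient components of $B_Z := (K_Y+B_Y-K_Z)|_Z$, which are $f$-exceptional and lie over a codimension $\geq 2$ subset of $X$, are disjoint from the finite set $f^{-1}(x) \cap Z$. Pick $y \in f^{-1}(x) \cap Z$; by hypothesis $(Z, B_Z + \sum_i f^*D_i|_Z)$ is sub log canonical.

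Since $Z$ is smooth at $y$, sub log canonicity applied to the blow-up of $y$ gives
\[
\mathrm{mult}_y\bigl(B_Z + \textstyle\sum_i f^*D_i|_Z\bigr) \;\leq\; \dim Z \;=\; \dim_x C_x.
\]
By the parenthetical hypothesis that no component of $Y$ is mapped into $\bigcup \Supp D_i$, each $f^*D_i|_Z$ is an effective Cartier divisor on $Z$ containing $y$, so $\sum_i \mathrm{mult}_y (f^*D_i|_Z) \geq k$. By minimality of $C_x = X$, no coefficient-one component of $B_Z$ can pass through $y$, since any such component would yield a qlc stratum through $x$ strictly contained in $X$, contradicting minimality; by construction, no negative-coefficient component passes through $y$ either. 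Hence $\mathrm{mult}_y B_Z \geq 0$, and combining yields $k \leq \dim_x C_x \leq \dim_x X$.

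\textbf{The main obstacle} is arranging, in the second paragraph, that the negative-coefficient components of $B_Z$ avoid the point $y$. These components are $f$-exceptional divisors lying over a codimension $\geq 2$ subset of $X$, but there is no a priori reason they cannot meet $f^{-1}(x) \cap Z$, and if they did, $\mathrm{mult}_y B_Z$ could become negative, leaving room for $\sum \mathrm{mult}_y (f^*D_i|_Z)$ to exceed $\dim Z$. Justifying that \cite[Proposition 4.1]{fujino-pull} is flexible enough to separate these negative components from the finitely many preimages of $x$ in $Z$ is the crucial technical step, and it parallels the corresponding manoeuvre in the proof of \cite[Lemma 3.71]{fujino-book}.
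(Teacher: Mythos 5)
Your first reduction (pass to the minimal qlc stratum $C_x$ via Theorems \ref{f-thm3.11} and \ref{f-thm3.8}, so that one may assume $X=C_x$ is the unique qlc stratum) agrees with the beginning of Step 3 of the paper's proof. But the core of your argument --- the multiplicity estimate at a point $y\in f^{-1}(x)\cap Z$ for a component $Z$ of $Y$ mapping birationally onto $X$ --- has a genuine gap, and in fact two. First, a quasi-log resolution need not contain any component mapping birationally, or even generically finitely, onto $X$: the definition only requires $f$ proper with $f^*\omega\sim_{\mathbb R}K_Y+B_Y$ and condition (2), and for instance $Y=X\times E$ with $E$ an elliptic curve can occur, so $\dim Z$ can exceed $\dim_x X$ and your inequality degenerates. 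Second, and more fundamentally, the obstacle you flag at the end is not a technical manoeuvre that \cite[Proposition 4.1]{fujino-pull} can perform: that proposition modifies the ambient space $M$ by blow-ups, which only \emph{adds} exceptional components with negative coefficients in $B_Y$; it cannot move the existing negative part of $B_Z$ off of $f^{-1}(x)$. Indeed $f^{-1}(x)\cap Z$ need not be finite and can be \emph{entirely contained} in the negative-coefficient locus (take $X$ smooth of dimension $n$, $Z\to X$ the blow-up at $x$, $B_Z=-(n-1)E$, so every $y\in f^{-1}(x)=E$ has $\operatorname{mult}_yB_Z=-(n-1)$, and your chain of inequalities only yields $k\le 2n-1$). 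This is precisely why the proof of \cite[Lemma 3.71]{fujino-book} does not proceed this way.

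The paper's actual argument is an induction on $\dim_x X$ that never leaves the quasi-log category. After reducing to the case where $X$ is the unique qlc stratum, it observes that the sub log canonicity hypothesis makes $f:(Y,B_Y+f^*D_1)\to X$ a quasi-log resolution of $[X,\omega+D_1]$ with only quasi-log canonical singularities; since $f_*\mathcal O_Y(\lceil -(B_Y^{<1})\rceil)\simeq\mathcal O_X$, every irreducible component of $\Supp D_1$ is dominated by a stratum of $(Y,B_Y+f^*D_1)$ and hence is a qlc stratum of $[X,\omega+D_1]$. Adjunction (Theorem \ref{f-thm3.8}) then gives a qlc pair structure on the union $X'$ of the qlc strata of $[X,\omega+D_1]$ contained in $\Supp D_1$; one has $x\in X'$ and $\dim_xX'<\dim_xX$, and $[X',(\omega+D_1)|_{X'}]$ together with $D_2|_{X'},\dots,D_k|_{X'}$ satisfies the hypotheses of the lemma, so the inductive hypothesis yields $k-1\le\dim_xX'<\dim_xX$. (The one-dimensional base case is handled separately using $f_*\mathcal O_Y(\lceil -(B_Y^{<1})\rceil)\simeq\mathcal O_X$ to force $k\le 1$; your proposal has no analogue of this step.) If you want to salvage your write-up, you should replace the entire multiplicity computation by this ``peel off $D_1$ by adjunction'' induction.
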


\begin{proof}We prove this lemma by induction on the dimension. 
\begin{step}\label{f-step1}
By \cite[Proposition 4.1]{fujino-pull}, 
we may assume that $(Y, B_Y+\sum _{i=1}^kf^*D_i)$ is 
a globally embedded simple normal crossing pair. 
Let $i_0$ be any positive integer with 
$1\leq i_0\leq k$. 
Note that $f_*\mathcal O_{Y}(\lceil -(B_{Y}^{<1})\rceil)\simeq \mathcal O_X$. 
Therefore, for any irreducible component $T$ of $\Supp D_{i_0}$, 
there is a stratum $S$ of $(Y, B_Y+f^*D_{i_0})$ mapped onto 
$T$. 
Note that $f:(Y, B_Y+\sum _{i=1}^kf^*D_i)\to X$ gives 
a natural quasi-log structure on $[X, \omega+\sum _{i=1}^kD_i]$ with 
only quasi-log canonical singularities. We also note that 
$\Supp D_i$ and $\Supp D_j$ have no common irreducible components for 
$i\ne j$ by the condition $f_*\mathcal O_Y(\lceil -(B^{<1}_Y)\rceil)\simeq 
\mathcal O_X$. 
\end{step}
\begin{step}\label{f-step2}
In this step, we assume that $\dim _xX=1$. 
If $x$ is a qlc stratum of $[X, \omega]$, then we have $k=0$. 
Therefore, we may assume that $x$ is not a qlc stratum of $[X, \omega]$. 
By shrinking $X$ around $x$, we may assume that 
every stratum of $(Y, B_Y)$ is mapped onto $X$. 
Then $X$ is irreducible and normal (see Lemma \ref{f-lem3.9}), 
and $f:Y\to X$ is flat. 
In this case, $f_*\mathcal O_Y(\lceil -(B_Y^{<1})\rceil)\simeq \mathcal O_X$ 
implies $k\leq 1=\dim _xX$. 
\end{step}
\begin{step}\label{f-step3}
We assume that $\dim _x X\geq 2$. 
If $x$ is a qlc stratum of $[X, \omega]$, then $k=0$. 
So we may assume that 
$x$ is not a qlc stratum of $[X, \omega]$. 
Let $C$ be the minimal qlc stratum of $[X, \omega]$ passing through 
$x$. 
By shrinking $X$ around $x$, 
we may assume that $C$ is normal (see Theorem \ref{f-thm3.11}). 
By \cite[Proposition 4.1]{fujino-pull}, 
we may assume that the union of all strata of $(Y, B_Y)$ mapped to 
$C$, which is denoted by $Y'$, is a union of some irreducible components of $Y$. 
Then $f:(Y', B_{Y'})\to C$ gives a natural quasi-log structure induced by 
the original quasi-log structure $f:(Y, B_Y)\to X$ 
(see Theorem \ref{f-thm3.8}). 
Therefore, by 
induction on the dimension, 
we have $k\leq \dim _xC\leq \dim _xX$ when $\dim _xC<\dim _xX$. 
Thus we may assume that $X$ is the unique qlc stratum of 
$[X, \omega]$. 
Note that $f:(Y, B_Y+f^*D_1)\to X$ gives a natural quasi-log 
structure on $[X, \omega+D_1]$ with only quasi-log canonical singularities. 
Let $X'$ be the union of qlc strata of $[X, \omega+D_1]$ contained 
in $\Supp D_1$. 
Then $[X', (\omega+D_1)|_{X'}]$ is a qlc pair 
with $\dim _xX'<\dim _x X$ (see Step \ref{f-step1}). 
Note that $[X', (\omega+D_1)|_{X'}]$ with 
$D_2|_{X'}, \cdots, D_k|_{X'}$ satisfies 
the condition similar to the original one for $[X, \omega]$ with 
$D_1, \cdots, D_k$ (see Step \ref{f-step1}). Therefore, 
$k-1\leq \dim _xX'<\dim _xX$. 
This implies $k\leq \dim _xX$. 
\end{step} 
Anyway, we obtained the desired inequality $k\leq \dim _xC_x$, where $C_x$ is the 
minimal qlc stratum of $[X, \omega]$ passing through $x$. 
\end{proof}

\section{Proof of the main theorem}\label{f-sec4}

In this section, we prove Theorem \ref{f-thm1.1}. 

\begin{proof}[Proof of Theorem \ref{f-thm1.1}]
We divide the proof into several steps. 
\setcounter{step}{0}
\begin{step}\label{f-4step1}
If $\dim X\setminus X_{-\infty}=0$, then Theorem \ref{f-thm1.1} obviously holds true. 
From now on, we assume that Theorem \ref{f-thm1.1} holds for 
any quasi-log scheme $Z$ with 
$\dim Z\setminus Z_{-\infty}<
\dim X\setminus X_{-\infty}$. 
\end{step}
\begin{step}\label{f-4step2}
We take a qlc stratum $C$ of $[X, \omega]$. 
We put $X'=C\cup X_{-\infty}$. 
Then $X'$ has a natural quasi-log 
structure induced by $[X, \omega]$ 
(see Theorem \ref{f-thm3.8}). 
By the vanishing theorem (see Theorem \ref{f-thm3.8}), 
we have 
$R^1\pi_*
(\mathcal I_{X'}\otimes \mathcal O_X(mL))=0$ for every $m\geq q$. 
Therefore, we obtain that 
$\pi_*\mathcal O_X(mL)\to \pi_*\mathcal O_{X'}
(mL)$ is surjective for 
every $m\geq q$. 
Thus, we may assume that $\overline {X\setminus X_{-\infty}}$ 
is irreducible for the proof of 
Theorem \ref{f-thm1.1} 
by the following commutative 
diagram. 
$$
\xymatrix{
\pi^*\pi_*\mathcal O_X(mL)\ar[r]\ar[d]&\pi^*\pi_*\mathcal O_{X'}(mL)\ar[r]\ar[d]&0\\
\mathcal O_X(mL)\ar[r]&\mathcal O_{X'}(mL)\ar[r]&0}
$$
\end{step} 
\begin{step}\label{f-4step3}
Let $f:(Y, B_Y)\to X$ be a quasi-log resolution. 
By Lemma \ref{f-lem3.12}, we may assume that 
every irreducible component of $Y$ is mapped to $\overline {X\setminus X_{-\infty}}$. 
We may further assume that $S$ is affine. 
\end{step}
\begin{step}\label{f-4step4}
In this step, we assume that $X$ is the disjoint union of $X_{-\infty}$ and 
a qlc stratum $C$ of $[X, \omega]$. We further assume that 
$C$ is the unique qlc stratum of $[X, \omega]$. 
In this case, we may assume that $X_{-\infty}=\emptyset$ 
by Lemma \ref{f-lem3.12}. 
By Lemma \ref{f-lem3.9}, we have that 
$X$ is normal. 
By Kodaira's lemma, we can write 
$qL-\omega\sim _{\mathbb R} A+E$ on $X$ such that 
$A$ is a $\pi$-ample $\mathbb Q$-divisor on $X$ and 
$E$ is an effective $\mathbb R$-Cartier $\mathbb R$-divisor on $X$. 
We put $\widetilde \omega=\omega+\varepsilon E$ with 
$0<\varepsilon \ll 1$. 
Then $[X, \widetilde\omega]$ is a quasi-log scheme with $\Nqlc(X, \widetilde \omega)=
\emptyset$ (see Lemma \ref{f-lem3.15}). 
Note that 
$$
qL-\widetilde \omega\sim _{\mathbb R}(1-\varepsilon)(qL-\omega)+\varepsilon A
$$ 
is $\pi$-ample. 
Therefore, by the basepoint-free theorem for quasi-log schemes 
(see \cite[Theorem 5.1]{ambro}, \cite[Theorem 3.66]{fujino-book}, 
Remark \ref{f-rem1.3}, and \cite[Theorem 6.5.1]{fujino-foundation}), 
we obtain that 
$\mathcal O_X(mL)$ is $\pi$-generated for every $m\gg 0$. 
\end{step}

\begin{step}\label{f-4step5} 
From now on, by Step \ref{f-4step4}, 
we may assume that there is a qlc center 
$C'$ of $[X, \omega]$ 
or assume that $C\cap X_{-\infty}\ne \emptyset$, 
where $X=C\cup X_{-\infty}$. 
We put 
$$
X'=(\underset{i\in I}{\cup}C_i)\cup X_{-\infty} 
$$ 
as in Theorem \ref{f-thm3.8}, 
where $\{C_i\}_{i\in I}$ is the set of all qlc centers of $[X, \omega]$, 
equivalently, $X'=\Nqklt(X, \omega)$. 
Then, by induction on the dimension or 
the assumption on $\mathcal O_{X_{-\infty}}(mL)$, $\mathcal O_{X'}(mL)$ is 
$\pi$-generated for every $m\gg 0$. 
By the same arguments as in Step \ref{f-4step2}, that is, 
the surjectivity of the restriction map 
$\pi_*\mathcal O_X(mL)\to \pi_*\mathcal O_{X'}(mL)$ 
for every $m\geq q$, 
$\mathcal O_X(mL)$ is $\pi$-generated in a neighborhood $U$ of $X'$ 
for every large and positive integer $m$. 
Note that $C\cap U\ne \emptyset$. 
In particular, for every prime number $p$ and every large 
positive integer $l$, 
$\mathcal O_{X}(p^lL)$ is 
$\pi$-generated in the above neighborhood $U$ of $X'=\Nqklt(X, \omega)$.  
\end{step}

\begin{step}\label{f-4step6} 
In this step, we prove the following claim. 
\begin{claim}
If the relative base locus $\Bs_{\pi}|p^lL|$ {\em{(}}with the reduced 
scheme structure{\em{)}} is not empty, then there is a positive integer $a$ such that 
$\Bs_{\pi}|p^{al}L|$ is strictly smaller than 
$\Bs_{\pi}|p^{l}L|$. 
\end{claim}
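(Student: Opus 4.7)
The plan is to run a Kawamata--Shokurov X-method adapted to quasi-log schemes, using Lemma \ref{f-lem3.16} to extract a log canonical threshold that forces a new qlc center through a very general base point, and then applying the induction hypothesis of Step \ref{f-4step1} to dispose of that point.

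First I would localize. Pick an irreducible component $W$ of $\Bs_\pi|p^l L|$ and a very general closed point $x_0\in W$. By Step \ref{f-4step5}, $\mathcal O_X(p^l L)$ is $\pi$-generated on some open neighborhood $U$ of $X'=\Nqklt(X,\omega)$, so $x_0\notin X'$. In particular $x_0$ lies on no qlc center, and by Theorem \ref{f-thm3.11} the minimal qlc stratum through $x_0$ is $C_0:=\overline{X\setminus X_{-\infty}}$, which by Step \ref{f-4step2} is irreducible of some dimension $d$ and, by Lemma \ref{f-lem3.9}, is normal at $x_0$.

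Next I would produce the threshold. Since $\mathcal O_X(p^l L)$ is generated at some point of $C_0\cap U$, I would choose $d+1$ general members $D_1,\dots,D_{d+1}\in|p^l L|$; each contains $x_0$ (as $x_0\in\Bs_\pi|p^l L|$), while none contains $C_0$. After modifying the quasi-log resolution $f\colon(Y,B_Y)\to X$ via \cite[Proposition 4.1]{fujino-pull} so that $\Supp(B_Y+\sum_i f^*D_i)$ is simple normal crossing, let $c\in(0,1]$ be the log canonical threshold of $\sum_i f^*D_i$ with respect to the normalization of $(Y,B_Y)$ in a neighborhood of $f^{-1}(x_0)$. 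If $c\ge 1$ held, then Lemma \ref{f-lem3.16} applied with $k=d+1$ would force $d+1\le\dim_{x_0}C_0=d$, a contradiction; hence $c<1$, and at $t=c$ a divisor of discrepancy exactly $-1$ appears over $x_0$, producing a new qlc center $C^{\mathrm{new}}$ of dimension strictly less than $d$.

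Now I would install the new quasi-log structure and iterate. Set $\omega':=\omega+c\sum_i D_i$ and $q':=q+c(d+1)p^l$, so $q'L-\omega'\sim_{\mathbb R} qL-\omega$ remains nef and log big. After further blow-ups of the ambient space $M$ so that each discrepancy-$(-1)$ divisor sits as a simple normal crossing component of the new boundary, I obtain a quasi-log structure on $[X,\omega']$ whose qlc strata through $x_0$ are contained in $C^{\mathrm{new}}$, and whose non-qlc locus absorbs each old qlc stratum that lies in $\Supp\sum_i D_i$. Applying Theorem \ref{f-thm3.8}(i) to the union $X^{\mathrm{new}}$ of $\Nqlc(X,\omega')$ with the new qlc centers through $x_0$ yields a quasi-log scheme with $\dim X^{\mathrm{new}}\setminus X^{\mathrm{new}}_{-\infty}<d=\dim X\setminus X_{-\infty}$. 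By the induction hypothesis of Step \ref{f-4step1}, together with the assumption on $\mathcal O_{X_{-\infty}}(mL)$, the sheaf $\mathcal O_{X^{\mathrm{new}}}(mL)$ is $\pi$-generated for $m\gg 0$; the vanishing part of Theorem \ref{f-thm3.8}(ii) then lifts these sections to $X$, so $\mathcal O_X(mL)$ is $\pi$-generated at $x_0$ for $m\gg 0$. Picking $m=p^{al}$ with $a$ sufficiently large, we obtain $x_0\notin\Bs_\pi|p^{al}L|$, and the generality of $x_0\in W$ then forces $W\not\subset\Bs_\pi|p^{al}L|$, which gives the required strict inclusion.

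The main obstacle will be the construction of the quasi-log structure $[X,\omega']$ at the exact threshold $c$: Lemma \ref{f-lem3.15} directly delivers a quasi-log structure only for an arbitrarily small $0<\varepsilon\ll 1$, whereas here $c$ is the precise threshold. I expect this to require combining Lemmas \ref{f-lem3.12}, \ref{f-lem3.14}, and \ref{f-lem3.15} with a careful sequence of blow-ups of $M$ so that all threshold discrepancies land on simple normal crossing components, and then verifying that $(q'L-\omega')|_{C^{\mathrm{new}}}$ is still $\pi$-big so that adjunction and the induction hypothesis genuinely apply to $[X^{\mathrm{new}},\omega'|_{X^{\mathrm{new}}}]$. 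Log bigness of $qL-\omega$ on every qlc stratum of the original pair, rather than mere nefness, is the crucial input that makes this bigness inheritance possible.
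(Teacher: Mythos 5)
There is a genuine gap, and it is exactly the one that Remark \ref{f-rem1.4} identifies in Ambro's original sketch. Your plan cuts with $d+1$ general members of $|p^lL|$, takes the log canonical threshold $c$, bounds it by $1$ via Lemma \ref{f-lem3.16}, and produces a new qlc center $C^{\mathrm{new}}$ inside the base locus --- all of which matches the paper. But you then want to apply the induction hypothesis of Step \ref{f-4step1} (i.e.\ Theorem \ref{f-thm1.1} in lower dimension) to $[X^{\mathrm{new}}, \omega'|_{X^{\mathrm{new}}}]$ with $q'L-\omega'\sim_{\mathbb R}qL-\omega$. That requires $q'L-\omega'$ to be nef \emph{and log big} with respect to the \emph{new} quasi-log structure, i.e.\ big on $C^{\mathrm{new}}$ and on all its qlc sub-strata. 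The hypothesis of Theorem \ref{f-thm1.1} only gives bigness of $qL-\omega$ on the qlc strata of the \emph{original} pair $[X,\omega]$; $C^{\mathrm{new}}$ is not one of them, and a nef and big divisor on a stratum $C$ does not restrict to a big divisor on an arbitrary subvariety of $C$ (it can die on the null locus --- and $C^{\mathrm{new}}$ sits inside $\Bs_\pi|p^lL|$, which is precisely the kind of locus where this happens). So the ``bigness inheritance'' you invoke in your last paragraph is false in general, and the inductive step does not close.

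The paper avoids this by changing the order of operations: \emph{before} taking the threshold, it applies Kodaira's lemma on $X^\dag$ (Lemma \ref{f-lem3.14}) to write $qL|_{X^\dag}-\omega^\dag\sim_{\mathbb R}A+E$ and replaces $\omega^\dag$ by $\widetilde\omega=\omega^\dag+\varepsilon E$ via Lemma \ref{f-lem3.15}, so that $qL|_{X^\dag}-\widetilde\omega$ is \emph{ample} over $S$. Ampleness, unlike log bigness, survives both the addition of $c\sum D_i$ and restriction to any new center $C_0$. One then applies the ordinary (ample) basepoint-free theorem for quasi-log schemes (Remark \ref{f-rem1.3}) to $[C_0,\overline\omega|_{C_0}]$ --- no appeal to the inductive hypothesis for the new structure is needed at this point --- and lifts a section killing $\Nqlc(X^\dag,\overline\omega)$ by the vanishing theorem. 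The price of the $\varepsilon E$ perturbation is that the qlc centers of $[X,\omega]$ contained in $\Supp E$ get pushed into the non-qlc locus, but that is harmless because Step \ref{f-4step5} already makes $mL$ free near $\Nqklt(X,\omega)$. Your secondary worry about defining the quasi-log structure at the exact threshold $c$ is a real but minor issue: the paper handles it by taking the supremum of $t$ for which the pair is sub log canonical only over $X^\dag\setminus\Nqlc(X^\dag,\widetilde\omega)$, so the locus where the coefficients exceed $1$ is simply absorbed into the non-qlc locus rather than requiring $c$ to be a genuine global lc threshold.
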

\begin{proof}[Proof of Claim] 
Note that $\Bs_{\pi}|p^{al}L|\subseteq \Bs_{\pi}|p^lL|$ for every 
positive integer $a$. 
We consider $[X^\dag, \omega^\dag]$ as in Lemma \ref{f-lem3.14}.
Since $(qL-\omega)|_{X^\dag}$ is nef and big over $S$, we can 
write $$qL|_{X^\dag}-\omega^\dag
\sim_{\mathbb R}A+E$$ on $X^\dag$ by Kodaira's lemma 
(cf.~\cite[Lemma A.10]{fujino-slc}), 
where $A$ is a $\pi$-ample 
$\mathbb Q$-divisor on $X^\dag$ and $E$ is an effective $\mathbb R$-Cartier 
$\mathbb R$-divisor 
on $X^\dag$. 
We note that $X^\dag$ is projective 
over $S$ and that $X^\dag$ is not necessarily normal 
(see Example \ref{f-example4.1}). 
By Lemma \ref{f-lem3.15}, we have a new 
quasi-log structure on $[X^\dag, \widetilde \omega]$, 
where $\widetilde \omega=\omega^\dag+\varepsilon E$ 
with $0<\varepsilon \ll 1$, such that 
\begin{equation}\label{f-eq4.1}
\Nqlc(X^\dag, \widetilde {\omega})=(\underset{i\in I}{\cup} C_i)\cup 
\Nqlc(X^\dag, \omega^\dag), 
\end{equation} 
where $\{C_i\}_{i\in I}$ is the set of qlc centers of $[X^\dag, 
\omega^\dag]$ contained 
in $\Supp E$. 

We put $n=\dim X^\dag$. Let 
$D_1, \cdots, D_{n+1}$ be general members of $|p^lL|$. 
Let $f:(Y, B_Y)\to X^\dag$ be a quasi-log resolution of $[X^\dag, \widetilde
\omega]$. 
We consider $f:(Y, B_Y+\sum _{i=1}^{n+1}f^*D_i)\to X^\dag$. 
We put 
$$
c=\underset{t\geq 0}{\sup}\left\{t \left|
\begin{array}{l}  {\text{the normalization of 
$(Y, B_Y+t\sum_{i=1}^{n+1}f^*D_i)$ is}}\\
{\text{sub log canonical over $X^\dag\setminus\Nqlc(X^\dag, \widetilde {\omega})$}} 
\end{array}\right. \right\}. 
$$
Then we have 
$c<1$ by Lemma \ref{f-lem3.16}. We have 
$0<c$ by Step \ref{f-4step5}. Thus, 
$$f:(Y, B_Y+c\sum _{i=1}^{n+1}f^*D_i)\to X^\dag$$ gives a quasi-log 
structure on $[X^\dag, \widetilde {\omega}+c\sum _{i=1}^{n+1}D_i]$. 
Note that $[X^\dag, \widetilde {\omega}+c\sum _{i=1}^{n+1}D_i]$ 
has only quasi-log canonical singularities on $X^\dag\setminus 
\Nqlc(X^\dag, \widetilde \omega)$. 
By construction, there is a qlc center $C_0$ of $[X^\dag, \widetilde \omega+c
\sum _{i=1}^{n+1}D_i]$ contained in $\Bs_{\pi}|p^lL|$. 
We put $\widetilde \omega+c
\sum _{i=1}^{n+1}D_i=\overline \omega$. 
Then 
$$
C_0\cap \Nqlc(X^\dag, \overline \omega)=\emptyset 
$$ 
because 
$$
\Bs_{\pi}|p^lL|\cap \Nqklt (X, \omega)=\emptyset. 
$$ 
Note that $\Nqlc(X^\dag, \overline \omega)=\Nqlc(X^\dag, \widetilde \omega)$ 
by construction. 
We also note that 
\begin{align*}
(q+c(n+1)p^l)L|_{X^\dag}-\overline \omega\sim _{\mathbb R}(1-\varepsilon)
(qL|_{X^\dag}-\omega^\dag)+\varepsilon A
\end{align*} 
is ample over $S$. 
Therefore, 
\begin{equation}\label{f-eq4.2}
\pi_*\mathcal O_{X^\dag}(mL)\to \pi_*\mathcal O_{C_0}
(mL)\oplus \pi_*\mathcal O_{\Nqlc(X^\dag, \overline \omega)}(mL)
\end{equation}
is surjective for every $m\geq q+c(n+1)p^l$. 
Moreover, $\pi_*\mathcal O_{C_0}(mL)$ is $\pi$-generated 
for every $m\gg 0$ by the basepoint-free theorem for quasi-log schemes 
(see \cite[Theorem 5.1]{ambro}, \cite[Theorem 3.66]{fujino-book}, 
Remark \ref{f-rem1.3}, and \cite[Theorem 6.5.1]{fujino-foundation}). 
Note that $[C_0, {\overline \omega}|_{C_0}]$ is 
a quasi-log scheme with only quasi-log canonical singularities 
by Theorem \ref{f-thm3.8} and Lemma \ref{f-lem3.13}. 
Therefore, we can construct a section $s$ 
of $\mathcal O_{X^\dag}(p^{al}L)$ for some positive integer $a$ such that 
$s|_{C_0}$ is not zero and $s$ is zero on $\Nqlc(X^\dag, \overline{\omega})$ 
by \eqref{f-eq4.2}. 
Thus $s$ is zero on $$\Nqlc(X^\dag, \overline{\omega})=\Nqlc 
(X^\dag, \widetilde{\omega})=(\cup _{i\in I} C_i )\cup \Nqlc (X^\dag, 
\omega^\dag)$$ by \eqref{f-eq4.1}. 
In particular, $s$ is zero on $\Nqlc(X^\dag, \omega^\dag)$. 
So, $s$ can be seen as a section of $\mathcal O_X(p^{al}L)$ 
because 
$\mathcal I_{\Nqlc(X^\dag, \omega^\dag)}=\mathcal I_{\Nqlc(X, \omega)}$ 
by construction (see Lemma \ref{f-lem3.14}). 
Therefore, $\Bs_{\pi}|p^{al}L|$ is strictly smaller than $\Bs_{\pi}|p^lL|$. 
We complete the proof of Claim. 
\end{proof}
\end{step}
\begin{step}\label{f-4step7}
By Step \ref{f-4step6} and the noetherian induction, $\mathcal O_{X}(p^lL)$ and 
$\mathcal O_X(p'^{l'}L)$ are both $\pi$-generated for large 
$l$ and $l'$, where $p$ and $p'$ are distinct prime numbers. 
So, there exists a positive integer $m_0$ such that 
$\mathcal O_X(mL)$ is $\pi$-generated for every $m\geq m_0$. 
\end{step}
Thus we obtain the desired basepoint-free theorem. 
\end{proof}

\begin{ex}\label{f-example4.1}
Let $C$ be a nodal curve on a smooth 
surface. Then $[C, K_C]$ is a quasi-log scheme with 
only quasi-log canonical singularities. 
In this case, $C$ is not normal. 
\end{ex}

Finally, we prove Theorem \ref{f-thm1.5} and Theorem \ref{f-thm1.7}. 

\begin{proof}[Proof of Theorem \ref{f-thm1.5} and Theorem \ref{f-thm1.7}]
Let $f:Y\to X$ be a resolution such that 
$K_Y+B_Y=f^*(K_X+B)$ and $\Supp B_Y$ is a simple normal crossing 
divisor on $Y$. 
By Lemma \ref{f-lem3.2}, $(Y, B_Y)$ is a globally embedded simple normal crossing 
pair. 
Then $f:(Y, B_Y)\to X$ defines a quasi-log structure on $[X, K_X+B]$. 
We note that $\mathcal J_{\mathrm{NLC}}(X, B)$ 
coincides with the defining ideal sheaf of $\Nqlc(X, K_X+B)$ and 
that $C$ is a qlc stratum of $[X, K_X+B]$ if and only if $C$ is 
a log canonical stratum of $(X, B)$. 
Therefore, Theorem \ref{f-thm1.7} is a special case of Theorem \ref{f-thm1.1}. 
Moreover, Theorem \ref{f-thm1.5} is a special case of Theorem \ref{f-thm1.7}. 
\end{proof}

%%%%%%%%%%%%%%%%%%%%%%%%%%%

\end{document}